\newcommand\supp{\mathop{\rm supp}}
\theoremstyle{plain} 
\newtheorem{theorem}{\indent\sc Theorem}[section]
\newtheorem{lemma}[theorem]{\indent\sc Lemma}
\newtheorem{corollary}[theorem]{\indent\sc Corollary}
\newtheorem{proposition}[theorem]{\indent\sc Proposition}
\theoremstyle{definition} 
\newtheorem{definition}[theorem]{\indent\sc Definition}
\newtheorem{remark}[theorem]{\indent\sc Remark}
\def\address#1#2{\begingroup
\noindent\parbox[t]{7.8cm}{%
\small{\scshape\ignorespaces#1}\par\vskip1ex
\noindent\small{\itshape E-mail address}%
\/: #2\par\vskip4ex}\hfill%
\endgroup}%
\title{\uppercase{Weighted Calder\'on-Hardy spaces}} 
\author{
%
%
\textsc{Pablo Rocha} 
}
\date{} 
\begin{document}

\maketitle

\begin{abstract}
In this work we present the weighted Calder\'on-Hardy spaces on Euclidean spaces and investigate their properties. As an application we show, for certain power weights, that the iterated Laplace operator is a bijection from these spaces onto classical weighted Hardy spaces. The main tools to achieve our result are the atomic decomposition of weighted Hardy spaces, fundamental solutions of iterated laplacian and pointwise inequalities for certain maximal functions. 
\end{abstract}

\tableofcontents

\vspace{.2cm}

{\large References}

\footnote{ 
2020 \textit{Mathematics Subject Classification}.
Primary 42B25, 42B30.}
\footnote{ 
\textit{Key words and phrases}:
Weighted Calder\'on-Hardy spaces; weighted Hardy spaces; atomic decomposition; Laplace operator.}

\section{Introduction}

It is well known that classical Hardy spaces $H^{p}(\mathbb{R}^{n})$ with $0 < p \leq 1$ are play an important role in the harmonic 
analysis and PDEs. Many important operators are better behaved on Hardy spaces $H^{p}(\mathbb{R}^{n})$  than on Lebesgue spaces 
$L^{p}(\mathbb{R}^{n})$ in the range $0 < p \leq 1$.

The Hardy spaces $H^{p}(\mathbb{R}^{n})$, $0 < p < \infty$, were first defined by Stein and Weiss \cite{Stein2} in terms of the theory of harmonic functions on $\mathbb{R}^{n}$. Afterward, Fefferman and Stein \cite{fefferman} introduced real variable methods into this subject and characterized the Hardy spaces by means of maximal functions. This second approach brought greater flexibility to the theory.

The spaces $H^{p}(\mathbb{R}^{n})$ can also be characterized by atomic decompositions. Roughly speaking, every distribution $f \in H^{p}$ can be expressed of the form
\begin{equation} \label{atomic series}
f = \sum_j \lambda_j a_j, 
\end{equation}
where the $a_j$'s are $p$ - atoms, $\{ \lambda_j \} \in \ell^{p}$ and $\| f \|^{p}_{H^{p}} \approx \sum_j | \lambda_j |^{p}$.
For $0 < p \leq 1$, an $p$ - atom is a function $a(\cdot)$ supported on a cube $Q$ such that
\[
\| a \|_{\infty} \leq |Q|^{-1/p} \,\,\  \text{and} \,\, \int x^{\alpha} a(x) dx = 0 \,\,\, \text{for all} \,\, |\alpha| \leq 
n \left( \frac{1}{p}-1 \right).
\]
Such decompositions were obtained by Coifman \cite{Coifman} for the case $n=1$ and by Latter \cite{Latter} for the case $n \geq 1$. These decompositions allow to study the behavior of certain operators on $H^{p}(\mathbb{R}^{n})$ by focusing one's attention on individual atoms. In principle, the continuity of an operator $T$ on $H^{p}(\mathbb{R}^{n})$ can often be proved by estimating $Ta$ when $a(\cdot)$ is an atom. For more results about Hardy spaces see \cite{Taibleson, Elias, Lu, Uchi, grafakos2}.

Gatto, Jim\'enez and Segovia \cite{segovia}, by using the atomic decomposition (\ref{atomic series}) for members of 
$H^{p}(\mathbb{R}^{n})$, solved the equation
\begin{equation} \label{eq}
\Delta^{m} F = f, \,\,\,\,\, \text{for} \,\, f \in H^{p}(\mathbb{R}^{n}).
\end{equation}
Moreover, they characterized the solution set of (\ref{eq}). These sets result be the Calder\'on-Hardy spaces 
$\mathcal{H}^{p}_{q, 2m}(\mathbb{R}^{n})$, which were defined by them for this purpose. More precisely, they proved that the iterated Laplace operator $\Delta^{m}$ is a {\bf bijective mapping} from Calder\'on-Hardy spaces $\mathcal{H}^{p}_{q, 2m}(\mathbb{R}^{n})$ onto Hardy spaces $H^{p}(\mathbb{R}^{n})$. They also  investigated the properties of these spaces and obtained an atomic decomposition for their elements.

The one-dimensional case with weights was studied by  Ombrosi \cite{sheldy}, there he introduced the one-sided Calder\'on-Hardy spaces 
$\mathcal{H}^{p,+}_{\alpha}((x_{-\infty}, +\infty), w)$ for weights $w$ in a Sawyer class and investigated their properties (see also
\cite{segovia2}). Perini \cite{perini} studied the boundedness of one-sided fractional integrals on these spaces. Ombrosi, Perini and Testoni 
\cite{sheldy1} obtained a complex interpolation theorem between one-sided Calder\'on-Hardy spaces. 

With the appearing of the theory of variable exponents the Hardy type spaces received a new impetus (see \cite{Orlicz, Kovacik, Diening2, Fiorenza, nakai, Cruz-Uribe2}). In this setting, the author \cite{rocha1} defined the variable Calder\'on-Hardy spaces 
$\mathcal{H}^{p(\cdot)}_{q, \gamma}(\mathbb{R}^{n})$, and studied the behavior of the iterated Laplace operator $\Delta^{m}$ on these spaces obtaining analogous results to those of Gatto, Jim\'enez and Segovia.

Recently, Auscher and Egert \cite{Egert} presented results on elliptic boundary value problems where the theory of Hardy spaces associated with operators plays a key role (see also \cite{Auscher} and references therein). 

The purpose of this paper is to define the weighted Calder\'on-Hardy spaces $\mathcal{H}^{p}_{q, \gamma}(\mathbb{R}^{n}, w)$ and investigate their properties. In Section 4 of this work we will prove our main results. These are contained in the following theorems.

\

{\sc Theorem} \ref{main result}. {\it Let $1 < q < \infty$, \, $n (2m + n/q)^{-1} <  p  \leq 1$ and let $0 < \mu  <  2m$  be such that 
$n  <  (2m+n/q - \mu) \, p$. If $w \in \mathcal{A}_{\frac{(2m+n/q - \mu)}{n}p}$, then the iterated Laplace operator $\Delta^{m}$ is a surjective mapping from $\mathcal{H}^{p}_{q, 2m}(\mathbb{R}^{n}, w)$ onto $H^{p}(\mathbb{R}^{n}, w)$. Moreover, there exist two positive constants $C_1$ and $C_2$ such that}
\begin{equation} \label{double ineq}
C_1 \|G \|_{\mathcal{H}^{p}_{q, 2m}(\mathbb{R}^{n}, w)} \leq \| \Delta^{m} G \|_{H^{p}(\mathbb{R}^{n}, w)} \leq C_2 
\|G \|_{\mathcal{H}^{p}_{q, 2m} (\mathbb{R}^{n}, w)}
\end{equation}
{\it hold for all} $G \in \mathcal{H}^{p}_{q, 2m}(\mathbb{R}^{n}, w)$.

\

The following theorem generalize to \cite[Theorem 1]{segovia}.

\

{\sc Theorem} \ref{main result 2}. {\it Let $w_a(x)= |x|^{a}$, $1 < q \leq r < \infty$, $n (2m + n/q)^{-1} <  p  \leq 1$, and let 
$0 < \mu  <  2m$ be such that $n  <  (2m+n/q - \mu) \, p$. If \,
$0 \leq a < \min \left\{ \frac{np}{r}, n \left(\frac{(2m+n/q - \mu)}{n}p - 1 \right) \right\}$, then the iterated Laplace operator 
$\Delta^{m}$ is a bijective mapping from $\mathcal{H}^{p}_{q, 2m}(\mathbb{R}^{n}, w_a)$ onto $H^{p}(\mathbb{R}^{n}, w_a)$, and 
(\ref{double ineq}) holds with $w=w_a$}.

\

The threshold $n (2m + n/q)^{-1}$ in the above results is optimal in the following sense:

\

{\sc Theorem} \ref{example}. {\it Let $w_a(x)= |x|^{a}$ with $-n < a < n(s-1)$ and $1 < s < \infty$. If 
$\displaystyle{p \leq \frac{n + \min\{ a, 0\}}{2m+n/q}}$ and $p \leq 1$, then $\mathcal{H}^{p}_{q, \, 2m}(\mathbb{R}^{n}, w_a) = \{ 0 \}.$}

\

This paper is organized as follows. In Section 2 we gives the basics of weighted Lebesgue theory, establish a Fefferman-Stein 
vector-valued inequality for the Hardy-Littlewood maximal operator and also recall the atomic decomposition of weighted Hardy spaces 
given in \cite{rocha2}. In Section 3 we define the weighted Calder\'on-Hardy spaces and investigate their properties.
The iterated Laplacian is also presented. Theorem \ref{main result} is proved in Section 4.

\

\textbf{Notation:} The symbol $A\lesssim B$ stands for the inequality $A \leq cB$ for some constant $c$. We denote by $Q( x_0, r)$ the cube centered at $x_0 \in \mathbb{R}^{n}$ with side lenght $r$. Given a cube $Q=Q(x_0, r)$ and $\delta >0$, we set $\delta Q = Q(x_0, \delta r)$. For a measurable subset $E\subseteq \mathbb{R}^{n}$ we denote by $\left\vert E\right\vert $ and $\chi_{E}$ the Lebesgue measure of $E$ and the characteristic function of $E$ respectively.  As usual we denote with $\mathcal{S}(\mathbb{R}^{n})$ the space of smooth and rapidly decreasing functions and with $\mathcal{S}'(\mathbb{R}^{n})$ the dual space (i.e.: the space of tempered distributions). A distribution $u$ acting on an element $\varphi \in \mathcal{S}(\mathbb{R}^{n})$ is denoted by $(u, \varphi)$. $\Delta$ and $\delta$ stand for the Laplacian and the Dirac's delta on $\mathbb{R}^{n}$ respectively. If $\mathbf{\alpha }$ is the multiindex $\alpha =(\alpha_{1},...,\alpha_{n})$ then 
$\left\vert \alpha \right\vert =\alpha _{1}+...+\alpha_{n}$. Given a real number $s \geq 0$, we write $\lfloor s \rfloor$ for the integer part of $s$.

Throughout this paper, $C$ will denote a positive constant, not necessarily the same at each occurrence.

\section{Preliminaries}

In this section we present weighted Lebesgue spaces and weighted Hardy spaces. For more information about these spaces the reader can consult \cite{Cruz-Uribe, Garcia2, grafakos} and \cite{Garcia, Torch} respectively. 

\subsection{Weighted Lebesgue spaces}

A weight $w$ is a non-negative locally integrable function on $\mathbb{R}^{n}$ that takes values in $(0, \infty)$ almost everywhere, i.e.: the weights are allowed to be zero or infinity only on a set of Lebesgue measure zero.

Given a weight $w$ and $0 < p < \infty$, we denote by $L^{p}(\mathbb{R}^{n}, w)$ the spaces of all functions $f$ defined on $\mathbb{R}^{n}$ satisfying 
$\| f \|_{L^{p}(\mathbb{R}^{n}, w)}^{p} := \int_{\mathbb{R}^{n}} |f(x)|^{p} w(x) dx < \infty$ . When $p=\infty$, we have that 
$L^{\infty}(\mathbb{R}^{n}, w) =L^{\infty}(\mathbb{R}^{n})$ with 
$\| f \|_{L^{\infty}(\mathbb{R}^{n}, w)} = \| f \|_{L^{\infty}(\mathbb{R}^{n})}$. If $E$ is a measurable set, we use the notation $w(E) = \int_{E} w(x) dx$. It is easy to check that $w(E)=0$ if and only if $|E|=0$.

It is well known that the harmonic analysis on weighted spaces is relevant if the weights $w$ belong to the class 
$\mathcal{A}_{p}$. Before defining the class $\mathcal{A}_p$, we first introduce the Hardy-Littlewood maximal operator.

Let $f$ be a locally integrable function on $\mathbb{R}^{n}$. The function
$$M(f)(x) = \sup_{Q \ni x} \frac{1}{|Q|} \int_{Q} |f(y)| dy,$$
where the supremum is taken over all cubes $Q$ containing $x$, is called the uncentered Hardy-Littlewood maximal function of $f$.

We say that a weight $w \in \mathcal{A}_1$ if there exists $C >  0$ such that
\begin{equation*}
M(w)(x) \leq C w(x), \,\,\,\,\, a.e. \, x \in \mathbb{R}^{n}, 
\end{equation*}
the best possible constant is denoted by $[w]_{\mathcal{A}_1}$. Equivalently, a weight $w \in \mathcal{A}_1$ if there exists $C >  0$ such that for every cube $Q$
\[
\frac{1}{|Q|} \int_{Q} w(x) dx \leq C \, ess\inf_{x \in Q} w(x). 
\]

For $1 < p < \infty$, we say that a weight $w \in \mathcal{A}_p$ if there exists $C> 0$ such that for every cube $Q$
$$\left( \frac{1}{|Q|} \int_{Q} w(x) dx \right) \left( \frac{1}{|Q|} \int_{Q} [w(x)]^{-\frac{1}{p-1}} dx \right)^{p-1} \leq C.$$
It is well known that $\mathcal{A}_{p_1} \subset \mathcal{A}_{p_2}$ for all $1 \leq p_1 < p_2 < \infty$. Also, if $w \in \mathcal{A}_{p}$ with $1 < p < \infty$, then there exists $1 < q <p$ such that $w \in \mathcal{A}_{q}$. This leads us to the following definition.

\begin{definition} Given $w \in \mathcal{A}_p$ with $1 < p < \infty$, we define the critical index of $w$ by
\[
q_{w} = \inf \{ q>1 : w \in \mathcal{A}_q \}.
\]
\end{definition}

\begin{remark} The index $q_w$ is related to the vanishing moment condition satisfied by the atoms for the atomic decompositions of the weighted Hardy spaces (see Definition \ref{atoms} below).
\end{remark}

The $\mathcal{A}_{p}-$weights, $1 < p < \infty$, give the following characterization for the Hardy-Littlewood maximal operator $M$:
\[
\int_{\mathbb{R}^{n}} [Mf(x)]^{p} w(x) dx \leq C \int_{\mathbb{R}^{n}} |f(x)|^{p} w(x) dx,
\]
for all $f \in L^{p}(\mathbb{R}^{n}, w)$ if and only if $w \in \mathcal{A}_p$ (see \cite[Theorem 9]{Muck}).

A weight $w$ satisfies the reverse H\"older inequality with exponent $s > 1$, denoted by $w \in RH_{s}$, if there exists $C> 0$ such that for every cube $Q$,
$$\left(\frac{1}{|Q|} \int_{Q} [w(x)]^{s} dx \right)^{\frac{1}{s}} \leq C \frac{1}{|Q|} \int_{Q} w(x) dx;$$
the best possible constant is denoted by $[w]_{RH_s}$. We observe that if $w \in RH_s$, then by H\"older's inequality, $w \in RH_t$ for all $1 < t < s$, and $[w]_{RH_t} \leq [w]_{RH_s}$. Moreover, if $w \in RH_{s}$, $s >1$, then $w \in RH_{s+ \epsilon}$ for some $\epsilon >0$. This gives the following definition. 

\begin{definition} Given $w \in RH_s$ with $s > 1$, we define the critical index of $w$ for the reverse H\"older condition by
\[
r_w = \sup \{ r >1 : w \in RH_{r} \}.
\]
\end{definition}

\begin{remark} The index $r_w$ is used to determine the size condition satisfied by the atoms for the atomic decompositions of the weighted Hardy spaces (see Definition \ref{atoms} below).
\end{remark}

In view of \cite[Corollary 7.3.4]{grafakos}, we define the class $\mathcal{A}_{\infty}$ by 
$\mathcal{A}_{\infty} = \bigcup_{1 \leq p < \infty} \mathcal{A}_p$. Since $w \in \mathcal{A}_{\infty}$ if and only if $w \in RH_{s}$ 
for some $s>1$, then it follows that $1 < r_w \leq +\infty$ for all $w \in \mathcal{A}_{\infty}$.

\vspace{.2cm}

The following lemma states a Fefferman-Stein vector-valued inequality for the Hardy-Littlewood maximal operator on 
$L^{p}(\mathbb{R}^{n}, w)$. This lemma is crucial to get Theorem 4.1.

\begin{lemma} \label{crucial lemma}
Let $1 < p < \infty$. Then for $u \in \left( 1,\infty \right) $ and $w \in \mathcal{A}_p$ we have that
\[
\left\Vert \left(\sum\limits_{j=1}^{\infty}\left(M f_{j}\right)^{u}\right)^{\frac{1}{u}} \right\Vert _{L^{p}(\mathbb{R}^{n}, w)} 
\lesssim \left\Vert \left( \sum\limits_{j=1}^{\infty }\left\vert f_{j}\right\vert^{u} \right)^{\frac{1}{u}}
\right\Vert _{L^{p}(\mathbb{R}^{n}, w)},
\]
holds for all sequences of bounded measurable functions with compact support $\left\{ f_{j}\right\}_{j=1}^{\infty}$.
\end{lemma}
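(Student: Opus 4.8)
The plan is to derive the vector-valued inequality from the scalar weighted maximal inequality via a duality and extrapolation argument, which is the standard route to Fefferman--Stein type estimates. Recall that for $w \in \mathcal{A}_p$ with $1 < p < \infty$, the scalar inequality $\|Mf\|_{L^p(\mathbb{R}^n, w)} \lesssim \|f\|_{L^p(\mathbb{R}^n, w)}$ is exactly the Muckenhoupt characterization cited above from \cite[Theorem 9]{Muck}. The goal is to upgrade this to the $\ell^u$-valued statement.

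First I would treat the case $u = p$, which is immediate: by Tonelli's theorem and the scalar inequality applied to each $f_j$,
\[
\left\| \left( \sum_j (Mf_j)^p \right)^{1/p} \right\|_{L^p(w)}^p = \sum_j \int_{\mathbb{R}^n} (Mf_j)^p\, w\, dx \lesssim \sum_j \int_{\mathbb{R}^n} |f_j|^p\, w\, dx = \left\| \left( \sum_j |f_j|^u \right)^{1/u} \right\|_{L^p(w)}^p,
\]
so the constant depends only on $[w]_{\mathcal{A}_p}$. The real content is to pass from the diagonal exponent $u = p$ to an arbitrary $u \in (1, \infty)$, and for this I would invoke the Rubio de Francia extrapolation theorem, or equivalently carry out its proof directly. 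The key idea is to fix $u$ and realize the mixed-norm $\|(\sum_j (Mf_j)^u)^{1/u}\|_{L^p(w)}$ as a norm in the space $L^p(\ell^u, w)$; one then tests against the dual space $L^{(p/?)'}$ using the fact that $\mathcal{A}_p$ weights form a class stable under the Rubio de Francia algorithm.

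More concretely, the cleanest self-contained approach splits into the two regimes $u < p$ and $u > p$. When $u < p$, I would use duality in $L^{(p/u)'}$: writing the left-hand side raised to the power $u$ as a supremum of $\int \sum_j (Mf_j)^u\, h\, w\, dx$ over nonnegative $h$ with $\|h\|_{L^{(p/u)'}(w)} \le 1$, I would replace $h$ by the Rubio de Francia iterate $\mathcal{R}h = \sum_{k \ge 0} M^{(k)}h / (2\|M\|)^k$, which dominates $h$, has comparable norm, and crucially belongs to $\mathcal{A}_1$ with a controlled constant so that $(\mathcal{R}h)\,w \in \mathcal{A}_1 \cdot \mathcal{A}_p$. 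This lets me apply a scalar weighted maximal inequality with the new weight $\mathcal{R}h \cdot w$ and sum in $j$. When $u > p$, I would dualize instead in the $\ell^u$ variable and run an analogous algorithm, or simply deduce this range from the $u < p$ range and the diagonal case by interpolation with change of measure.

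The main obstacle will be verifying the $\mathcal{A}_p$-stability under the change of weight: one must check that multiplying $w \in \mathcal{A}_p$ by an $\mathcal{A}_1$ factor produced by the Rubio de Francia iteration keeps the relevant scalar maximal inequality valid with a uniform constant, and that the exponents match up so that Hölder's inequality closes the estimate. Once that bookkeeping is in place, summing over $j$ and undoing the duality yields the claimed bound; the restriction to bounded, compactly supported $f_j$ guarantees all the intermediate integrals are finite and the manipulations are justified, after which a routine density argument would extend it if needed.
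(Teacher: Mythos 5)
Your overall architecture is the same as the paper's: the paper proves this lemma in one line, citing Muckenhoupt's scalar weighted maximal theorem \cite[Theorem 9]{Muck} together with the Rubio de Francia extrapolation machinery \cite[Corollary 3.12]{Cruz-Uribe}, which is exactly the route you propose (diagonal case plus extrapolation). The difference is that you attempt to carry out the extrapolation argument by hand, and the step you yourself flag as ``the main obstacle'' is not merely unverified --- as you describe it, it fails. In the regime $u<p$ you iterate $M$ itself to form $\mathcal{R}h=\sum_k M^{(k)}h/(2\|M\|)^k$ with $h$ in the unit ball of $L^{(p/u)'}(\mathbb{R}^n,w)$. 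For this series to converge you need $M$ bounded on $L^{(p/u)'}(\mathbb{R}^n,w)$, i.e.\ $w\in\mathcal{A}_{(p/u)'}$, and this does \emph{not} follow from $w\in\mathcal{A}_p$: since $(p/u)'\geq p$ holds only when $p\leq u+1$, the inclusion $\mathcal{A}_p\subset\mathcal{A}_{(p/u)'}$ breaks down for $p>u+1$. Concretely, with $u=3/2$, $p=4$ one has $(p/u)'=8/5$, and $w(x)=|x|^{a}$ with $\tfrac{3n}{5}\leq a<3n$ lies in $\mathcal{A}_4$ but not in $\mathcal{A}_{8/5}$, so your algorithm does not even converge.

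The second, independent failure is the claim that $\mathcal{R}h\in\mathcal{A}_1$ forces $(\mathcal{R}h)\,w\in\mathcal{A}_1\cdot\mathcal{A}_p$ in a usable sense: to sum in $j$ you need the scalar maximal inequality at the exponent $u$, i.e.\ $(\mathcal{R}h)\,w\in\mathcal{A}_u$ with $u<p$, and products of $\mathcal{A}_1$ with $\mathcal{A}_p$ weights are not in $\mathcal{A}_u$ --- indeed not even in $\mathcal{A}_p$ (take $|x|^{-b}\in\mathcal{A}_1$ and $|x|^{c}\in\mathcal{A}_p$ with $c-b\leq -n$; the product is not locally integrable). The correct device, which is the actual content of the proof of \cite[Theorem 3.9]{Cruz-Uribe} underlying the corollary the paper cites, is the \emph{adjoint} algorithm built from $Sh=M(hw)/w$, which is bounded on $L^{p'}(\mathbb{R}^n,w)$ precisely because $w\in\mathcal{A}_p$ is equivalent to $w^{1-p'}\in\mathcal{A}_{p'}$; its fixed-point property gives $M\bigl((\mathcal{R}'h)\,w\bigr)\leq C\,(\mathcal{R}'h)\,w$, so that $(\mathcal{R}'h)\,w\in\mathcal{A}_1\subset\mathcal{A}_u$ directly. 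In general one runs \emph{two} algorithms simultaneously (one on the functions, one on the dual side at exponent $p'$, not $(p/u)'$) and assembles the $\mathcal{A}_u$ weight by the factorization $v=v_1 v_2^{1-u}$ with $v_1,v_2\in\mathcal{A}_1$. Your $u>p$ case (downward extrapolation) is closer to being correct with a single algorithm, using the standard lemma that $w\in\mathcal{A}_p$, $v\in\mathcal{A}_1$ imply $w\,v^{\,p-u}\in\mathcal{A}_u$ for $u\geq p$; but as written the $u<p$ half of your argument has a genuine gap, and repairing it amounts to reproducing the proof of the extrapolation theorem that the paper simply invokes.
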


\begin{proof} This lemma follows from \cite[Theorem 9]{Muck}, and \cite[Corollary 3.12]{Cruz-Uribe}.
\end{proof}

\subsection{Weighted Hardy spaces}

We gives the definition of weighted Hardy spaces and the atomic decomposition of these spaces developed by the author in 
\cite{rocha2}. We topologize $\mathcal{S}(\mathbb{R}^{n})$ by the collection of semi-norms $\{ p_{N} \}_{N \in \mathbb{N}}$ given by
$$p_{N}(\varphi) = \sum
\limits_{\left\vert \mathbf{\beta}\right\vert \leq N}\sup\limits_{x\in
\mathbb{R}^{n}}\left(  1+\left\vert x\right\vert \right)  ^{N}\left\vert
\partial^{\mathbf{\beta}}\varphi(x)\right\vert,$$
for each $N \in \mathbb{N}$. We set $\mathcal{F}_{N}=\left\{  \varphi\in \mathcal{S}(\mathbb{R}^{n}): p_{N}(\varphi) \leq 1 \right\}$. Let $f \in \mathcal{S}'(\mathbb{R}^{n})$, we denote by $\mathcal{M}_{\mathcal{F}_{N}}$ the grand maximal
operator given by
\[
\mathcal{M}_{\mathcal{F}_{N}}f(x)=\sup\limits_{t>0}\sup\limits_{\varphi\in\mathcal{F}_{N}
}\left\vert \left(  t^{-n}\varphi(t^{-1} \cdot)\ast f\right)  \left(  x\right)
\right\vert ,
\]
where $N$ is a large and fix integer.

\begin{definition}
Let $0 < p < \infty$. The weighted Hardy space $H^{p}(\mathbb{R}^{n}, w)$ is the set of all $f \in S'(\mathbb{R}^{n})$ for which 
$\mathcal{M}_{\mathcal{F}_{N}}f \in L^{p}(\mathbb{R}^{n}, w)$. In this case the "norm" of $f$ in $H^{p}(\mathbb{R}^{n}, w)$ is defined as 
\[
\left\Vert f\right\Vert _{H^{p}(\mathbb{R}^{n}, w)} := \left\Vert \mathcal{M}_{\mathcal{F}_{N}}f\right\Vert_{L^{p}(\mathbb{R}^{n}, w)}.
\]
\end{definition}

It is known that if $1 < p < \infty$ and $w \in \mathcal{A}_{p}$, then $H^{p}(\mathbb{R}^{n}, w) \simeq L^{p}(\mathbb{R}^{n}, w)$. In the range $0 < p \leq 1$ these spaces are not comparable.

Now, we introduce our atoms. We recall that our definition of atom differs from that given in \cite{Garcia, Torch}.

\begin{definition} \label{atoms} $(w-(p, p_{0}, d)$ atom$)$. Let $w \in \mathcal{A}_{\infty}$ with critical index $q_w$ and critical index $r_w$ for reverse H\"older condition. Let $0 < p \leq 1$, $\max \{ 1, p(\frac{r_w}{r_w -1}) \} < p_0 \leq \infty$,
and $d \in \mathbb{Z}$ such that $d \geq \lfloor n(\frac{q_w}{p} -1) \rfloor$, we say that a function $a(\cdot)$ is a $w - (p, p_0, d)$ atom centered in $x_0 \in \mathbb{R}^{n}$ if

\

$(a1)$ $a \in L^{p_0}(\mathbb{R}^{n})$ with support in the cube  $Q= Q(x_0, r)$.

\

$(a2)$ $\| a \|_{L^{p_0}(\mathbb{R}^{n})} \leq |Q|^{\frac{1}{p_0}} \left[ w(Q) \right]^{-\frac{1}{p}}$.

\

$(a3)$ $\int x^{\alpha} a(x) \, dx  =0$ for all multi-index $\alpha$ such that $| \alpha | \leq d$.
\end{definition}

\vspace{0.2cm}

Indeed, a $w-(p, p_{0}, d)$ atom $a(\cdot)$ belongs to $H^{p}(\mathbb{R}^{n}, w)$ $($see \cite[Lemma 2.8]{rocha2}$)$.

\begin{remark} \label{cond p_0}
We observe that the condition $\max \{ 1, p(\frac{r_w}{r_w -1}) \} < p_0 < \infty$ implies that 
$w \in RH_{(\frac{p_0}{p})'}$. If $r_w = \infty$, then $w \in RH_{t}$ for each $1 < t < \infty$. So, if $r_w =  \infty$ and since 
$\displaystyle{\lim_{t \rightarrow \infty}} \frac{t}{t-1} = 1$ we put $\frac{r_w}{r_w - 1} =1$. For example, if $w \equiv 1$, then 
$q_w =1$ and $r_w =  \infty$ and the definition of atom in this case coincide with the definition of atom in the classical Hardy spaces.
\end{remark}

The set
\[
\widehat{\mathcal{D}}_{0} = \left\{ \phi \in \mathcal{S}(\mathbb{R}^{n}) : \widehat{\phi} \in C_{c}^{\infty}(\mathbb{R}^{n}) \,\, \textit{and} 
\,\,\, 0 \notin \supp ( \widehat{\phi} \,)  \right\}
\]
is dense in $H^{p}(\mathbb{R}^{n}, w)$, $0 < p  < \infty$, for every  $w \in \mathcal{A}_{\infty}$ (see Theorem 1, p. 103, in \cite{Torch} and Proposition 7.1.5 (9), p. 503-504, in \cite{grafakos}).

Finally, the atomic decomposition for $H^{p}(\mathbb{R}^{n}, w)$, $0 < p \leq 1$, established in \cite{rocha2} it is as follows.

\begin{theorem} \label{atomic decomp} (\cite[Theorem 2.9]{rocha2}) Let $f \in \hat{\mathcal{D}}_{0}$, and $0 < p \leq 1$. If $w \in \mathcal{A}_{\infty}$, then there exist a sequence of $w - (p, p_0, d)$ atoms $\{ a_j \}$ and a sequence of scalars $\{ \lambda_j \}$ with $\sum_{j} |\lambda_j |^{p} \leq c \| f \|_{H^{p}_{w}}^{p}$ such that $f = \sum_{j} \lambda_j a_j$, where the convergence is in $L^{p_0}(\mathbb{R}^{n})$.
\end{theorem}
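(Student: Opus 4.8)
The plan is to run the Calder\'on--Zygmund--Fefferman--Stein decomposition scheme adapted to the weight $w$, exploiting that every $w \in \mathcal{A}_{\infty}$ is doubling and lies in $RH_s$ for some $s>1$. Fix $f \in \widehat{\mathcal{D}}_{0}$; since $\widehat{f} \in C_c^{\infty}$ vanishes near the origin, $f$ is a Schwartz function all of whose polynomial moments vanish, and $\mathcal{M}_{\mathcal{F}_{N}}f \in L^{p}(\mathbb{R}^{n},w)$ with rapid decay. For each $k \in \mathbb{Z}$ I would set $\Omega_k = \{ x : \mathcal{M}_{\mathcal{F}_{N}}f(x) > 2^{k} \}$; these sets are open (lower semicontinuity of the grand maximal function), nested decreasingly, with $\bigcup_k \Omega_k = \{\mathcal{M}_{\mathcal{F}_{N}}f > 0\}$ and empty intersection.

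Next I would fix, for each $k$, a Whitney family of cubes $\{Q_{k,i}\}_i$ covering $\Omega_k$ with bounded overlap and side lengths comparable to the distance to $\Omega_k^c$, together with a smooth partition of unity $\{\zeta_{k,i}\}$ subordinate to a fixed dilate of these cubes. For each pair $(k,i)$ let $P_{k,i}$ be the unique polynomial of degree at most $d$ with $\int (f - P_{k,i})\zeta_{k,i}\, x^{\alpha}\,dx = 0$ for all $|\alpha| \le d$ (the orthogonal projection of $f$ onto polynomials of degree $\le d$ with respect to the measure $\zeta_{k,i}\,dx$), and set the bad pieces $b_{k,i} = (f - P_{k,i})\zeta_{k,i}$ and $g_k = f - \sum_i b_{k,i}$. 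The heart of the argument is the pointwise estimate $\|g_k\|_{\infty} \lesssim 2^{k}$ together with the uniform bound $\sup_{Q_{k,i}} |P_{k,i}\zeta_{k,i}| \lesssim 2^{k}$; both follow from the defining property of $\mathcal{M}_{\mathcal{F}_{N}}$, by testing $f$ against suitably rescaled bump functions supported on the Whitney cubes and using the equivalence of norms on the finite-dimensional space of polynomials of degree $\le d$.

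Forming the telescoping identity $f = \sum_k (g_{k+1}-g_k)$, a standard rearrangement gives $g_{k+1}-g_k = \sum_i A_{k,i}$, where each $A_{k,i}$ is supported in a fixed dilate of $Q_{k,i}$, has vanishing moments up to order $d$, and satisfies $|A_{k,i}| \lesssim 2^{k}$. Since $d \ge \lfloor n(q_w/p - 1)\rfloor$ by the standing assumption on atoms, property $(a3)$ holds. Setting $\lambda_{k,i} = C\,2^{k}[w(Q_{k,i})]^{1/p}$ and $a_{k,i} = \lambda_{k,i}^{-1}A_{k,i}$, the bound $\|A_{k,i}\|_{L^{p_0}} \lesssim 2^{k}|Q_{k,i}|^{1/p_0}$ (valid for every $p_0 \le \infty$ because $A_{k,i}$ is bounded) yields $\|a_{k,i}\|_{L^{p_0}} \lesssim |Q_{k,i}|^{1/p_0}[w(Q_{k,i})]^{-1/p}$, which is $(a1)$--$(a2)$ after absorbing the dilation factor via the doubling of $w$.

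For the coefficient sum, bounded overlap of the Whitney cubes together with doubling gives $\sum_i w(Q_{k,i}) \lesssim w(\Omega_k)$, so that
\[
\sum_{k,i} |\lambda_{k,i}|^{p} \lesssim \sum_k 2^{kp}\,w(\Omega_k) \approx \int_{\mathbb{R}^{n}} \bigl(\mathcal{M}_{\mathcal{F}_{N}}f\bigr)^{p} w\,dx = \|f\|_{H^{p}(\mathbb{R}^{n},w)}^{p},
\]
the last comparison being the layer-cake identity $\sum_k 2^{kp}\mu(\{F>2^{k}\}) \approx \int F^{p}\,d\mu$ with $d\mu = w\,dx$ and $F = \mathcal{M}_{\mathcal{F}_{N}}f$; convergence of $\sum_{k,i}\lambda_{k,i}a_{k,i}$ to $f$ in $L^{p_0}$ then follows from the smoothness and rapid decay of $f \in \widehat{\mathcal{D}}_{0}$. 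The \emph{main obstacle} is the weighted normalization: the pointwise estimates for $g_k$ and $A_{k,i}$ are Lebesgue-based ($\lesssim 2^{k}$), while the atoms are normalized by $[w(Q)]^{-1/p}$. Reconciling these, and checking that the pieces really satisfy $(a2)$, requires the $\mathcal{A}_{\infty}$ and reverse-H\"older structure (namely $w \in RH_{(p_0/p)'}$, cf. Remark \ref{cond p_0}) to pass between $|Q|$ and $w(Q)$; one must also carefully couple the number $N$ of derivatives defining $\mathcal{F}_{N}$ with the moment order $d$ so that the grand-maximal control of the correcting polynomials $P_{k,i}$ is strong enough.
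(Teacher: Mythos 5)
Two things, one about context and one about substance. First, this paper never proves Theorem \ref{atomic decomp}: it is imported verbatim from \cite[Theorem 2.9]{rocha2}, so the only comparison possible is with the cited source, whose argument is precisely the Calder\'on--Zygmund--Fefferman--Stein scheme you run (level sets of the grand maximal function, Whitney cubes with a subordinate partition of unity, degree-$d$ moment-correcting projections, the telescoping identity $f=\sum_k(g_{k+1}-g_k)$). Your skeleton is therefore the right one. The genuine gap is that you dispatch, in half a clause, exactly the point that the paper explicitly singles out as the novelty of the theorem: convergence of the atomic series in the \emph{unweighted} $L^{p_0}(\mathbb{R}^{n})$ norm. ``Follows from the smoothness and rapid decay of $f$'' is not an argument. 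What is needed: since $f\in\widehat{\mathcal{D}}_{0}$ is Schwartz with \emph{all} moments vanishing, one first proves $\mathcal{M}_{\mathcal{F}_{N}}f(x)\lesssim(1+|x|)^{-L}$ for suitably large $L$, hence $\mathcal{M}_{\mathcal{F}_{N}}f\in L^{p_0}(\mathbb{R}^{n},dx)\cap L^{\infty}$; then (i) $\Omega_k=\emptyset$ once $2^{k}>\|\mathcal{M}_{\mathcal{F}_{N}}f\|_{\infty}$, so the telescoping sum terminates above, and (ii) the pointwise bound $|g_k|\lesssim\min\bigl(2^{k},\mathcal{M}_{\mathcal{F}_{N}}f\bigr)$ gives $\|g_k\|_{L^{p_0}}\to0$ as $k\to-\infty$ by dominated convergence, and likewise makes $\sum_k\|g_{k+1}-g_k\|_{L^{p_0}}$ controllable. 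Only this upgrades the classical $\mathcal{S}'$-convergent decomposition to the $L^{p_0}$-convergent one actually asserted; without it you have proved the standard theorem, not this one.

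Conversely, what you flag as the ``main obstacle'' is not an obstacle, and locating it there misreads where the hypotheses act. Since your pieces satisfy $\|A_{k,i}\|_{\infty}\lesssim 2^{k}$ and are supported in a fixed dilate $Q^{*}_{k,i}$ of the Whitney cube, the choice $\lambda_{k,i}=C\,2^{k}[w(Q^{*}_{k,i})]^{1/p}$ yields condition $(a2)$ of Definition \ref{atoms} for \emph{every} $p_0\leq\infty$ with no reverse H\"older input at all; doubling enters only to compare $w(Q^{*}_{k,i})$ with $w(Q_{k,i})$, and the bound $\sum_i w(Q_{k,i})\lesssim w(\Omega_k)$ needs only the bounded overlap of the Whitney family, not doubling. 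The restriction $\max\{1,p(\tfrac{r_w}{r_w-1})\}<p_0$, i.e. $w\in RH_{(p_0/p)'}$ (Remark \ref{cond p_0}), does its work in the \emph{converse} direction --- showing that a $w$-$(p,p_0,d)$ atom belongs to $H^{p}(\mathbb{R}^{n},w)$, which is \cite[Lemma 2.8]{rocha2} --- not in the decomposition you are constructing. Your closing remark about coupling $N$ with $d$ is correct but standard: one fixes $N$ large relative to $d$ (hence to $n(q_w/p-1)$) so that the rescaled bump functions and the polynomial projections are controlled by $\mathcal{M}_{\mathcal{F}_{N}}f$ on the Whitney cubes.
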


The novelty in this theorem is the convergence in $L^{p_0}(\mathbb{R}^{n})$-norm of the weighted atomic series.

\begin{remark} \label{w-atomic decomp}
Since $\widehat{\mathcal{D}}_{0}$ is dense in $H^{p}(\mathbb{R}^{n}, w)$, $0 < p \leq 1$, a routine argument allows us to ensure that every member of $H^{p}(\mathbb{R}^{n}, w)$ has a weighted atomic decomposition as in Theorem \ref{atomic decomp}, where the convergence is in $\mathcal{S}'(\mathbb{R}^{n})$.

The atoms in \cite[Theorem 2.9]{rocha2} are supported on balls, this theorem still holds if we consider atoms supported on cubes instead of balls. 
\end{remark}

\section{Weighted Calder\'on-Hardy spaces}

\subsection{Basics of weighted Calder\'on-Hardy spaces $\mathcal{H}^{p}_{q, \, \gamma}(\mathbb{R}^{n}, w)$}

Let $L^{q}_{loc}(\mathbb{R}^{n})$, $1 < q < \infty$, be the space of all measurable functions $g$ on $\mathbb{R}^{n}$ that belong locally to $L^{q}$ for compact sets of $\mathbb{R}^{n}$. We endowed $L^{q}_{loc}(\mathbb{R}^{n})$ with the topology generated by the seminorms
$$|g|_{q, \, Q} = \left( |Q|^{-1} \int_{Q} \, |g(y)|^{q}\, dy \right)^{1/q},$$ where $Q$ is a cube in $\mathbb{R}^{n}$ and $|Q|$ denotes its Lebesgue measure.

For $g \in L^{q}_{loc}(\mathbb{R}^{n})$, we define a maximal function $\eta_{q, \, \gamma}(g; x)$ as
$$\eta_{q, \, \gamma}(g; x) = \sup_{r > 0} r^{-\gamma} |g|_{q, \, Q(x, r)},$$
where $\gamma$ is a positive real number and $Q(x, r)$ is the cube centered at $x$ with side length $r$. This type of maximal function was introduced by Calder\'on \cite{calderon}.

Let $k$ a non negative integer and $\mathcal{P}_{k}$ the subspace of $L^{q}_{loc}(\mathbb{R}^{n})$ formed by all the polynomials of degree at most $k$. We denote by $E^{q}_{k}$ the quotient space of $L^{q}_{loc}(\mathbb{R}^{n})$ by $\mathcal{P}_{k}$. If $G \in E^{q}_{k}$, we define the seminorm
$\| G \|_{q, \, Q} = \inf \left\{ |g|_{q, \, Q} : g \in G \right\}$. The family of all these seminorms induces on $E^{q}_{k}$ the quotient topology.

Given a positive real number $\gamma$, we can write $\gamma = k + t$, where $k$ is a non negative integer and $0 < t \leq 1$. This decomposition is unique.

For $G \in E^{q}_{k}$, we define the maximal function $N_{q, \, \gamma}(G; x)$ by
\[
N_{q, \, \gamma}(G; x) = \inf \left\{ \eta_{q, \, \gamma}(g; x) : g \in G \right\}.
\] 
The maximal function $N_{q; \, \gamma}(G; \, \cdot \,)$ associated with a class $G$ in $E_{k}^{q}$ is lower semicontinuous (see  \cite[Lemma 6]{calderon}).

\begin{lemma} \label{g representante} Let $G \in E^{q}_{k}$ with $N_{q, \, \gamma}(G; x_0) < \infty,$ for some $x_0 \in \mathbb{R}^{n}$. Then,

$(i)$ there exists a unique $g \in G$ such that $\eta_{q, \, \gamma} (g; x_0) < \infty$ and, therefore, $\eta_{q, \, \gamma} (g; x_0) = N_{q, \, \gamma}(G; x_0)$;

$(ii)$ for any cube $Q$, there is a constant $c$ depending on $x_0$ and $Q$ such that if $g$ is the unique representative of $G$ given in 
$(i)$, then
$$\|G\|_{q, \, Q} \leq |g|_{q, \, Q} \leq C \, \eta_{q, \, \gamma} (g; x_0) = C \, N_{q, \, \gamma}(G; x_0).$$

The constant $c$ can be chosen independently of $x_0$ provided that $x_0$ varies in a compact set.
\end{lemma}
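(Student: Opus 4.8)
The plan is to isolate a single computational fact about polynomials and then deduce both parts by soft arguments. The decomposition $\gamma = k + t$ with $0 < t \le 1$ enters only through the strict inequality $k < \gamma$.

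First I would establish the key estimate: if $P \in \mathcal{P}_k$ is a nonzero polynomial, then $\eta_{q, \, \gamma}(P; x_0) = +\infty$. Writing $P(y) = \sum_{|\alpha| \le k} c_\alpha (y - x_0)^\alpha$ and setting $m = \min\{ |\alpha| : c_\alpha \neq 0\} \le k$, the substitution $y = x_0 + r z$ gives $|P|_{q, \, Q(x_0, r)}^{q} = \int_{Q(0,1)} \big| \sum_\alpha c_\alpha r^{|\alpha|} z^\alpha \big|^{q} \, dz$, whose leading behaviour as $r \to 0^{+}$ is $c \, r^{mq}$ with $c = \int_{Q(0,1)} \big| \sum_{|\alpha| = m} c_\alpha z^\alpha \big|^{q} dz > 0$ (the inner polynomial is a nonzero homogeneous form, so its $L^q$ average on $Q(0,1)$ is positive). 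Hence $r^{-\gamma} |P|_{q, \, Q(x_0, r)} \sim c^{1/q} r^{m - \gamma}$, which blows up because $m \le k < \gamma$.

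From this the uniqueness in $(i)$ is immediate: if $g_1, g_2 \in G$ both have finite $\eta$ at $x_0$, then $P := g_1 - g_2 \in \mathcal{P}_k$ satisfies $r^{-\gamma} |P|_{q, \, Q(x_0, r)} \le \eta_{q, \, \gamma}(g_1; x_0) + \eta_{q, \, \gamma}(g_2; x_0) < \infty$ for every $r > 0$ by the triangle inequality for the seminorm $|\cdot|_{q, \, Q(x_0, r)}$, contradicting the key estimate unless $P \equiv 0$. For existence and attainment I would argue softly rather than through a minimizing sequence: since $N_{q, \, \gamma}(G; x_0) < \infty$, the set $\{ \eta_{q, \, \gamma}(h; x_0) : h \in G\}$ must contain a finite value, so some $g \in G$ has $\eta_{q, \, \gamma}(g; x_0) < \infty$; by uniqueness $g$ is the \emph{only} representative with finite $\eta$, so every other representative has $\eta = +\infty$, and therefore $N_{q, \, \gamma}(G; x_0) = \inf_{h \in G} \eta_{q, \, \gamma}(h; x_0) = \eta_{q, \, \gamma}(g; x_0)$. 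This $g$ is the representative asserted in $(i)$.

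For $(ii)$, the inequality $\|G\|_{q, \, Q} \le |g|_{q, \, Q}$ is just the definition of the quotient seminorm applied to $g \in G$. For the second inequality I would fix $R > 0$ with $Q \subseteq Q(x_0, R)$ and estimate $|g|_{q, \, Q}^{q} = |Q|^{-1} \int_{Q} |g|^{q} \le |Q|^{-1} R^{n} |g|_{q, \, Q(x_0, R)}^{q} \le |Q|^{-1} R^{n} \big( N_{q, \, \gamma}(G; x_0) \, R^{\gamma} \big)^{q}$, where the last step uses $|g|_{q, \, Q(x_0, R)} \le \eta_{q, \, \gamma}(g; x_0) R^{\gamma} = N_{q, \, \gamma}(G; x_0) R^{\gamma}$ from $(i)$. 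This yields $|g|_{q, \, Q} \le C \, N_{q, \, \gamma}(G; x_0)$ with $C = |Q|^{-1/q} R^{n/q + \gamma}$, which depends on $x_0$ and $Q$ only through the choice of $R$; since one can pick a single $R$ with $Q \subseteq Q(x_0, R)$ valid for all $x_0$ in a fixed compact set, the constant is uniform there. The only genuinely computational ingredient is the polynomial scaling estimate, and I expect the main obstacle to be a bookkeeping one—making the $r \to 0^{+}$ asymptotics of $|P|_{q, \, Q(x_0, r)}$ precise enough to force the blow-up—but this is a routine homogeneity argument in which the strict inequality $k < \gamma$ does all the work.
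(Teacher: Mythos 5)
Your proposal is correct: the scaling estimate $\eta_{q,\gamma}(P;x_0)=+\infty$ for every nonzero $P\in\mathcal{P}_k$ (via the minimal homogeneous part of degree $m\le k<\gamma$ and the substitution $y=x_0+rz$ as $r\to 0^{+}$) yields uniqueness, your soft infimum argument correctly gives existence and the identity $\eta_{q,\gamma}(g;x_0)=N_{q,\gamma}(G;x_0)$, and the containing-cube estimate with $C=|Q|^{-1/q}R^{n/q+\gamma}$ settles $(ii)$, including uniformity for $x_0$ in a compact set. The paper gives no proof of its own, deferring to Lemma 3 of Gatto--Jim\'enez--Segovia, and your argument is essentially that standard proof.
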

\begin{proof} The proof is similar to that of Lemma 3 in \cite{segovia}.
\end{proof}

\begin{corollary} If $\{ G_{j} \}$ is a sequence of elements of $E^{q}_{k}$ converging to $G$ in 
$\mathcal{H}^{p}_{q, \, \gamma}(\mathbb{R}^{n}, w)$, then $\{ G_j \}$ converges to $G$ in $E^{q}_{k}$.
\end{corollary}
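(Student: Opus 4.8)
The plan is to unwind both topologies and reduce everything to the pointwise bound in part $(ii)$ of Lemma~\ref{g representante}. By definition of the quotient topology on $E^{q}_{k}$, convergence of $\{G_j\}$ to $G$ in $E^{q}_{k}$ means that $\|G_j - G\|_{q, Q} \to 0$ for every fixed cube $Q$; so I fix such a $Q$ and aim to estimate $\|G_j - G\|_{q, Q}$. On the other side, convergence in $\mathcal{H}^{p}_{q, \gamma}(\mathbb{R}^{n}, w)$ means $\|N_{q, \gamma}(G_j - G; \cdot)\|_{L^{p}(\mathbb{R}^{n}, w)} \to 0$. The bridge between the two is Lemma~\ref{g representante}$(ii)$, which says that for a class whose maximal function is finite at a point $x_0$ one has $\|G_j - G\|_{q, Q} \leq C\, N_{q, \gamma}(G_j - G; x_0)$, with a constant $C$ that can be taken uniform as long as $x_0$ ranges over a fixed compact set. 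The task is therefore to locate, for each $j$, a point $x_j$ lying in one fixed compact set at which $N_{q, \gamma}(G_j - G; \cdot)$ is both finite and small.

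To produce such points I would fix once and for all a cube $K$, which satisfies $w(K) > 0$ since the weight is locally integrable and positive almost everywhere, and run a Chebyshev argument on $K$. Writing $\eta_j := \|G_j - G\|_{\mathcal{H}^{p}_{q, \gamma}(\mathbb{R}^{n}, w)}$, we have $\int_{K} [N_{q, \gamma}(G_j - G; x)]^{p} w(x)\, dx \leq \eta_j^{p}$, whence for any $\lambda > 0$
\[
w\big( \{ x \in K : N_{q, \gamma}(G_j - G; x) > \lambda \} \big) \leq \lambda^{-p}\, \eta_j^{p}.
\]
Choosing $\lambda = \epsilon_j := 2\, \eta_j\, w(K)^{-1/p}$ when $\eta_j > 0$ forces the exceptional set to have $w$-measure at most $2^{-p} w(K) < w(K)$, so its complement in $K$ has positive measure and is in particular nonempty; hence there is a point $x_j \in K$ with $N_{q, \gamma}(G_j - G; x_j) \leq \epsilon_j < \infty$. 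When $\eta_j = 0$ the maximal function vanishes almost everywhere on $K$, and I simply pick $x_j \in K$ with $N_{q, \gamma}(G_j - G; x_j) = 0$ and set $\epsilon_j := 0$. In every case $x_j \in K$ and $\epsilon_j \to 0$, because $\eta_j \to 0$ by hypothesis.

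Finally I would invoke Lemma~\ref{g representante}$(ii)$ with the compact set $K$: since every $x_j$ lies in $K$ and $N_{q, \gamma}(G_j - G; x_j)$ is finite, there is a single constant $C = C(K, Q)$, independent of $j$, for which
\[
\|G_j - G\|_{q, Q} \leq C\, N_{q, \gamma}(G_j - G; x_j) \leq C\, \epsilon_j \longrightarrow 0.
\]
Since $Q$ was arbitrary, this yields convergence in every seminorm generating the topology of $E^{q}_{k}$, that is, $G_j \to G$ in $E^{q}_{k}$. The step I expect to be the crux is the uniformity of the constant in Lemma~\ref{g representante}$(ii)$: the selected points $x_j$ move with $j$, and the argument only closes because they can all be confined to the single compact set $K$, which is precisely the content of the last sentence of that lemma. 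The Chebyshev selection is what guarantees this confinement while simultaneously keeping the maximal function finite at $x_j$, so that the lemma is applicable in the first place.
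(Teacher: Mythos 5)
Your proof is correct and uses the same essential mechanism as the paper's: Lemma \ref{g representante}$(ii)$ with its constant uniform as the base point ranges over a compact set, combined with the $L^{p}(\mathbb{R}^{n}, w)$ smallness of $N_{q, \gamma}(G_j - G; \cdot)$. The paper closes the argument slightly more directly --- it integrates the uniform pointwise bound $\| G - G_j \|_{q, Q} \leq C \, N_{q, \gamma}(G - G_j; x)$ in $x$ over the cube $Q$ itself against $w(x)\, dx$, yielding at once $\| G - G_j \|_{q, Q} \leq C \, [w(Q)]^{-1/p} \| G - G_j \|_{\mathcal{H}^{p}_{q, \gamma}(\mathbb{R}^{n}, w)}$ --- whereas your Chebyshev selection of good points $x_j$ in a fixed auxiliary cube $K$ accomplishes the same reduction, with the finiteness of $N_{q, \gamma}(G_j - G; x_j)$ correctly secured so that the lemma applies.
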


\begin{proof} For any cube $Q$, by $(ii)$ of Lemma 4, we have
\begin{eqnarray*}
\| G- G_j \|_{q, \, Q} & \leq & C \, [w(Q)]^{-1/p} 
\| \chi_{Q}(\cdot) \,\, N_{q, \, \gamma}(G - G_j; \, \cdot \, ) \|_{L^{p}(\mathbb{R}^{n}, w)} \\
& \leq & C \, \| G - G_j \|_{\mathcal{H}^{p}_{q, \, \gamma}(\mathbb{R}^{n}, w)},
\end{eqnarray*}
which proves the corollary.
\end{proof}

\begin{lemma} \label{serie N}
Let $\{ G_j \}$ be a sequence in $E^{q}_{k}$ such that for a given point $x_0$, the series $\sum_j N_{q, \, \gamma}(G_j; \, x_0 )$ is finite. Then,

$(i)$ the series $\sum_j G_j$ converges in $E_{k}^{q}$ to an element $G$ and 
\[
N_{q, \, \gamma}(G; \, x_0 ) \leq \sum_j N_{q, \, \gamma}(G_{j}; \, x_0 );
\]

$(ii)$ if $g_j$ is the unique representative of $G_j$ satisfying
$\eta_{q, \, \gamma} (g_j; x_0) = N_{q, \, \gamma}(G_j; x_0)$, then $\sum_j g_j$ converges in $L^{q}_{loc}(\mathbb{R}^{n})$ to a 
function $g$ that is the unique representative of $G$ satisfying $\eta_{q, \, \gamma} (g; x_0) = N_{q, \, \gamma}(G; x_0)$.
\end{lemma}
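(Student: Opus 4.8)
The plan is to reduce the convergence of the series $\sum_j G_j$ in $E^q_k$ to the convergence of the series $\sum_j g_j$ of distinguished representatives in $L^q_{loc}(\mathbb{R}^n)$, and for the latter to exploit the pointwise control that $\eta_{q,\gamma}(g_j;x_0)$ provides on the local $L^q$-averages of $g_j$. First I would invoke Lemma \ref{g representante}$(i)$: since $N_{q,\gamma}(G_j;x_0) \leq \sum_j N_{q,\gamma}(G_j;x_0) < \infty$ for each $j$, each class $G_j$ has a unique representative $g_j$ with $\eta_{q,\gamma}(g_j;x_0) = N_{q,\gamma}(G_j;x_0)$. This identifies the natural candidate for the representative of the limit, and it is cleaner to prove $(ii)$ first and then read off $(i)$.

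The key estimate is the following. Fix an arbitrary cube $Q$. By the definition of $\eta_{q,\gamma}$, for every $g_j$ and every cube $Q(x_0,r)$ we have $|g_j|_{q,\,Q(x_0,r)} \leq r^{\gamma}\,\eta_{q,\gamma}(g_j;x_0)$. Choosing $r$ large enough that $Q \subseteq Q(x_0,r)$ and comparing averages over $Q$ and over $Q(x_0,r)$ (the ratio of measures is a fixed constant once $Q$ and $x_0$ are fixed), I obtain a bound of the form $|g_j|_{q,\,Q} \leq C(Q,x_0)\,\eta_{q,\gamma}(g_j;x_0) = C(Q,x_0)\,N_{q,\gamma}(G_j;x_0)$; this is exactly the mechanism already used in the proof of Lemma \ref{g representante}$(ii)$. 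Summing over $j$ and using the hypothesis $\sum_j N_{q,\gamma}(G_j;x_0) < \infty$ gives $\sum_j |g_j|_{q,\,Q} < \infty$, i.e.\ the partial sums of $\sum_j g_j$ are Cauchy in the seminorm $|\cdot|_{q,\,Q}$. Since $Q$ was arbitrary and $L^q(Q)$ is complete, the series $\sum_j g_j$ converges in $L^q_{loc}(\mathbb{R}^n)$ to some function $g$; passing to classes, $\sum_j G_j$ converges to $G := [g]$ in $E^q_k$, which is the first assertion of $(i)$.

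It remains to identify $g$ as the distinguished representative of $G$ at $x_0$ and to prove the subadditivity of $N_{q,\gamma}$. For the subadditivity in $(i)$, I would fix $r>0$ and estimate the average of $g = \sum_j g_j$ over $Q(x_0,r)$ by Minkowski's inequality in $L^q(Q(x_0,r))$:
\[
|g|_{q,\,Q(x_0,r)} \leq \sum_j |g_j|_{q,\,Q(x_0,r)} \leq \sum_j r^{\gamma}\,\eta_{q,\gamma}(g_j;x_0),
\]
so that $r^{-\gamma}|g|_{q,\,Q(x_0,r)} \leq \sum_j \eta_{q,\gamma}(g_j;x_0) = \sum_j N_{q,\gamma}(G_j;x_0)$; taking the supremum over $r>0$ gives $\eta_{q,\gamma}(g;x_0) \leq \sum_j N_{q,\gamma}(G_j;x_0) < \infty$. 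In particular $\eta_{q,\gamma}(g;x_0)$ is finite, so by the uniqueness in Lemma \ref{g representante}$(i)$, $g$ is the unique representative of $G$ with $\eta_{q,\gamma}(g;x_0) = N_{q,\gamma}(G;x_0)$, completing $(ii)$; and the computed bound yields $N_{q,\gamma}(G;x_0) = \eta_{q,\gamma}(g;x_0) \leq \sum_j N_{q,\gamma}(G_j;x_0)$, completing $(i)$.

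The only genuinely delicate point, and the one I would treat most carefully, is the interchange of summation with the $L^q_{loc}$-limit when verifying that $g$ is the honest representative of the limit class: a priori convergence in $E^q_k$ is modulo polynomials, so one must make sure the polynomial ambiguity does not spoil the identification of $g$ as the $\eta$-finite representative. This is handled precisely by the Minkowski estimate above, which controls $g$ itself (not merely its class) in every local $L^q$-norm and forces $\eta_{q,\gamma}(g;x_0) < \infty$; uniqueness then pins down $g$ without ambiguity. The remaining steps—the passage from the seminorm-Cauchy condition to $L^q_{loc}$-convergence and the comparison of averages on nested cubes—are routine and mirror the constructions already established for Lemma \ref{g representante}.
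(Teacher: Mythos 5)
Your proposal is correct and follows essentially the same route as the paper's proof, which simply defers to Lemma 4 of Gatto--Jim\'enez--Segovia: select the distinguished representatives $g_j$ via Lemma \ref{g representante}, use the bound $|g_j|_{q,\,Q} \leq C(Q,x_0)\,\eta_{q,\gamma}(g_j;x_0)$ to get summability of $\sum_j g_j$ in every local $L^{q}$ seminorm, and then identify the limit and prove the subadditivity of $\eta_{q,\gamma}$ at $x_0$ by Minkowski's inequality together with the uniqueness statement of Lemma \ref{g representante}$(i)$. The one point you flag as delicate (the polynomial ambiguity) is indeed dissolved exactly as you say, since your construction controls the function $g$ itself rather than only its class.
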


\begin{proof} The proof is similar to that of Lemma 4 in \cite{segovia}.
\end{proof}

\begin{proposition} \label{g distribucion}
If $g \in L^{q}_{loc}(\mathbb{R}^{n})$ and there is a point $x_0$ such that $\eta_{q, \, \gamma} (g ; x_0) < \infty$, then 
$g \in \mathcal{S}'(\mathbb{R}^{n})$.
\end{proposition}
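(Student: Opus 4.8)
The plan is to show that the linear functional $\varphi \mapsto \int_{\mathbb{R}^n} g(x)\varphi(x)\,dx$ is well defined and continuous on $\mathcal{S}(\mathbb{R}^n)$, which is exactly what it means for $g$ to belong to $\mathcal{S}'(\mathbb{R}^n)$. The only information at my disposal is the finiteness of $\eta_{q,\gamma}(g;x_0)$, so the first step is to convert it into a quantitative growth estimate on the local $L^q$ mass of $g$. Writing $A = \eta_{q,\gamma}(g;x_0) < \infty$, the definition of $\eta_{q,\gamma}$ gives $|g|_{q, Q(x_0,r)} \le A\, r^{\gamma}$ for every $r>0$, and since $|Q(x_0,r)| = r^n$ this is equivalent to
$$\int_{Q(x_0,r)} |g(y)|^q\,dy \le A^q\, r^{\,n+\gamma q}, \qquad r>0.$$
Thus $g$ grows at most polynomially in an averaged $L^q$ sense, which is precisely what is needed to pair it against a rapidly decreasing function.

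Next I would decompose $\mathbb{R}^n$ into the central cube $Q_0 = Q(x_0,1)$ and the dyadic annuli $A_k = Q(x_0, 2^k)\setminus Q(x_0, 2^{k-1})$, $k \ge 1$, and estimate $\int |g|\,|\varphi|$ on each piece. On $Q_0$, Hölder's inequality with exponents $q,q'$ together with the growth estimate (case $r=1$) and the fact that $\varphi \in L^{q'}$ gives a bound by $A\,\|\varphi\|_{L^{q'}(Q_0)}$. On the annulus $A_k$, Hölder's inequality yields $\int_{A_k}|g|\,|\varphi| \le \big(\int_{A_k}|g|^q\big)^{1/q}\big(\int_{A_k}|\varphi|^{q'}\big)^{1/q'}$; the first factor is at most $A\,(2^k)^{\,n/q+\gamma}$ by the growth estimate, while for the second factor I note that every point of $A_k$ lies outside $Q(x_0,2^{k-1})$, so $|x-x_0|\gtrsim 2^k$. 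The Schwartz decay, in the form $(1+|x-x_0|)^N|\varphi(x)| \lesssim (1+|x_0|)^N p_N(\varphi)$, then gives $\big(\int_{A_k}|\varphi|^{q'}\big)^{1/q'} \lesssim (1+|x_0|)^N p_N(\varphi)\, 2^{-kN}\, 2^{\,kn/q'}$, where I used $|A_k|\le |Q(x_0,2^k)|=2^{kn}$.

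Combining the two factors and using $n/q + n/q' = n$, the contribution of $A_k$ is bounded by a constant times $(1+|x_0|)^N p_N(\varphi)\,A\, 2^{\,k(n+\gamma-N)}$. Choosing the integer $N > n+\gamma$ makes $\sum_{k\ge 1} 2^{\,k(n+\gamma-N)}$ a convergent geometric series, so summing over $k$ together with the $Q_0$ term shows that $\int|g|\,|\varphi| < \infty$ and that
$$\left| \int_{\mathbb{R}^n} g(x)\varphi(x)\,dx \right| \le C(x_0, n, q, \gamma, A)\, p_N(\varphi).$$
This simultaneously establishes that the integral converges absolutely, so that the functional is well defined, and that it is continuous with respect to the topology of $\mathcal{S}(\mathbb{R}^n)$; hence $g \in \mathcal{S}'(\mathbb{R}^n)$. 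The only delicate point is the geometric bookkeeping on the annuli: one must ensure that the power of $2^k$ produced by the volume of $A_k$ and by the $L^q$ growth of $g$ is strictly dominated by the decay factor $2^{-kN}$ of the Schwartz function, which is guaranteed precisely by taking $N$ larger than $n+\gamma$. Everything else reduces to routine applications of Hölder's inequality, and the constant is allowed to depend on $x_0$, consistent with the statement.
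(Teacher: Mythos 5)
Your proof is correct and follows essentially the same route as the paper: a dyadic annular decomposition around $x_0$, H\"older's inequality on each annulus paired with the growth bound $\int_{Q(x_0,r)}|g|^q \le \eta_{q,\gamma}(g;x_0)^q\, r^{n+\gamma q}$, and the Schwartz seminorm decay with $N > n+\gamma$ to sum the geometric series. The only cosmetic difference is that the paper first translates to $x_0=0$, whereas you work directly at $x_0$ via the inequality $(1+|x-x_0|)\le(1+|x|)(1+|x_0|)$; both are equally valid.
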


\begin{proof}
We first assume that $x_0 = 0$. Given $\varphi \in \mathcal{S}(\mathbb{R}^{n})$ and $k > \gamma + n$ we have that 
$|\varphi(y)| \leq p_{k}(\varphi) (1+|y|)^{-k}$ for all $y \in \mathbb{R}^{n}$. So
\begin{eqnarray*}
\left| \int_{\mathbb{R}^{n}} g(y) \varphi(y) dy \right| & \leq & p_{k}(\varphi) \int_{Q(0,1)} |g(y)| (1+|y|)^{-k} dy \\\\
& + & p_{k}(\varphi) \sum_{j=0}^{\infty} \int_{Q(0, 2^{j+1}) \setminus Q(0, 2^{j})} |g(y)| (1+|y|)^{-k} dy \\\\
& \lesssim &  p_{k}(\varphi) \, \eta_{q, \gamma}(g; 0) \\\\
& + &  p_{k}(\varphi) \, \eta_{q, \gamma}(g; 0) \, \sum_{j=0}^{\infty} 2^{j(\gamma + n -k)}.
\end{eqnarray*}
Being $k > \gamma +n$ it follows that $g \in \mathcal{S}'(\mathbb{R}^{n})$. For the case $x_0 \neq 0$ we apply the translation operator 
$\tau_{x_0}$ defined by $(\tau_{x_0}g)(x) = g(x + x_0)$ and use the fact that $\eta_{q, \gamma}(\tau_{x_0}g; 0) = \eta_{q, \gamma}(g; x_0)$.
\end{proof}

Next, we define the weighted Calder\'on-Hardy spaces on $\mathbb{R}^{n}$. 

\begin{definition} Let $w \in \mathcal{A}_{\infty}$ and $0 < p \leq 1$, we say that an element $G \in E^{q}_{k}$ belongs to the weighted Calder\'on-Hardy space $\mathcal{H}^{p}_{q, \, \gamma}(\mathbb{R}^{n}, w)$ if the maximal function $N_{q, \, \gamma}(G; \, \cdot \, ) \in L^{p}(\mathbb{R}^{n}, w)$. The "norm" of $G$ in $\mathcal{H}^{p}_{q, \, \gamma}(\mathbb{R}^{n}, w)$ is defined as 
\[
\| G \|_{\mathcal{H}^{p}_{q, \, \gamma}(\mathbb{R}^{n}, w)} := \| N_{q, \, \gamma}(G; \, \cdot \, ) \|_{L^{p}(\mathbb{R}^{n}, w)}.
\]
\end{definition}

\begin{proposition} \label{completitud}
Let $w \in \mathcal{A}_{\infty}$. Then, the space $\mathcal{H}^{p}_{q, \, \gamma}(\mathbb{R}^{n}, w)$ is complete.
\end{proposition}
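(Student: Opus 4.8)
The plan is to realize $\|\cdot\|_{\mathcal{H}^{p}_{q,\gamma}(\mathbb{R}^{n}, w)}$ as a genuine metric (via a $p$-triangle inequality) and then invoke the standard criterion that a space whose quasi-norm is $p$-subadditive, $0<p\le1$, is complete as soon as every $p$-absolutely convergent series converges. First I would record the subadditivity of the maximal function, $N_{q,\gamma}(F+G;x)\le N_{q,\gamma}(F;x)+N_{q,\gamma}(G;x)$, which is the two-term instance of Lemma~\ref{serie N}(i). Combining this pointwise inequality with the elementary estimate $\big(\sum_j a_j\big)^p\le\sum_j a_j^{p}$ (valid for $a_j\ge0$ and $0<p\le1$) and the corresponding inequality $\int_{\mathbb{R}^{n}}|f+g|^{p}w\le\int_{\mathbb{R}^{n}}(|f|^{p}+|g|^{p})w$ in $L^{p}(\mathbb{R}^{n},w)$, one obtains
\[
\|F+G\|_{\mathcal{H}^{p}_{q,\gamma}(\mathbb{R}^{n}, w)}^{p}\le\|F\|_{\mathcal{H}^{p}_{q,\gamma}(\mathbb{R}^{n}, w)}^{p}+\|G\|_{\mathcal{H}^{p}_{q,\gamma}(\mathbb{R}^{n}, w)}^{p}.
\]
Hence $d(F,G):=\|F-G\|_{\mathcal{H}^{p}_{q,\gamma}(\mathbb{R}^{n}, w)}^{p}$ is a translation-invariant metric, and it suffices to prove that any sequence $\{G_j\}\subset\mathcal{H}^{p}_{q,\gamma}(\mathbb{R}^{n}, w)$ with $\sum_j\|G_j\|_{\mathcal{H}^{p}_{q,\gamma}(\mathbb{R}^{n}, w)}^{p}<\infty$ has a sum in the space.

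Given such a sequence, I would pass to the pointwise level by integration. Since $\sum_j\int_{\mathbb{R}^{n}}N_{q,\gamma}(G_j;x)^{p}\,w(x)\,dx<\infty$, Tonelli's theorem yields $\sum_j N_{q,\gamma}(G_j;x)^{p}<\infty$ for $w$-a.e.\ $x$; because $w(E)=0$ if and only if $|E|=0$, this holds off a set of Lebesgue measure zero. Applying $\big(\sum_j a_j\big)^p\le\sum_j a_j^{p}$ once more gives $\sum_j N_{q,\gamma}(G_j;x)<\infty$ for a.e.\ $x$. Fixing one such good point $x_0$, Lemma~\ref{serie N}(i) guarantees that the series $\sum_j G_j$ converges in $E^{q}_{k}$ to an element $G$, and that $G=S_M+\sum_{j>M}G_j$ in $E^{q}_{k}$, where $S_M:=\sum_{j=1}^{M}G_j$.

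To control the tail in the $\mathcal{H}^{p}$-norm, I would apply Lemma~\ref{serie N}(i) to the series $\sum_{j>M}G_j$ at every point $x$ of the full-measure good set, which produces the a.e.\ pointwise bound $N_{q,\gamma}(G-S_M;x)\le\sum_{j>M}N_{q,\gamma}(G_j;x)$. Raising to the $p$-th power, using subadditivity, and integrating against $w$ then gives
\[
\|G-S_M\|_{\mathcal{H}^{p}_{q,\gamma}(\mathbb{R}^{n}, w)}^{p}\le\sum_{j>M}\|G_j\|_{\mathcal{H}^{p}_{q,\gamma}(\mathbb{R}^{n}, w)}^{p},
\]
which tends to $0$ as $M\to\infty$ since $\sum_j\|G_j\|_{\mathcal{H}^{p}_{q,\gamma}(\mathbb{R}^{n}, w)}^{p}<\infty$. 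The same estimate with $M=0$ shows $G\in\mathcal{H}^{p}_{q,\gamma}(\mathbb{R}^{n}, w)$, so $S_M\to G$ in the space and completeness follows.

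The hard part will be the passage from the single-point statement of Lemma~\ref{serie N}(i) to an a.e.\ pointwise inequality that can legitimately be integrated. I would need to check carefully that the full-measure set on which $\sum_j N_{q,\gamma}(G_j;x)$ converges is precisely where the lemma applies, and — more delicately — that the $E^{q}_{k}$-limit $G$ it furnishes (which depends only on the quotient topology, not on $x$) is the \emph{same} class whose remainder $G-S_M$ is being estimated at each good $x$. Confirming this compatibility, i.e.\ that $N_{q,\gamma}(G-S_M;\cdot)$ really is the maximal function of the genuine $E^{q}_{k}$-tail rather than of some point-dependent object, is the technical core; the remaining steps are routine $0<p\le1$ bookkeeping.
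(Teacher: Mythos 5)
Your proof is correct and follows essentially the same route as the paper: reduce completeness to the Riesz--Fischer property, derive a.e.\ finiteness of $\sum_j N_{q,\gamma}(G_j;x)$ from $\sum_j\|G_j\|^{p}<\infty$, invoke Lemma \ref{serie N} to produce the limit $G$ in $E^{q}_{k}$, and integrate the pointwise tail bound (your Tonelli-plus-$p$-subadditivity step replaces the paper's normalized Fatou argument, but both yield the same inequality $\|\sum_{j\geq l}N_{q,\gamma}(G_j;\cdot)\|_{L^{p}(w)}^{p}\leq\sum_{j\geq l}\|G_j\|^{p}$). The compatibility issue you flag as the technical core is in fact automatic: $E^{q}_{k}$ is the quotient of the Fr\'echet space $L^{q}_{loc}(\mathbb{R}^{n})$ by the closed finite-dimensional subspace $\mathcal{P}_{k}$, hence Hausdorff, so the limit furnished by Lemma \ref{serie N} at any good point is the same class, and the tail $G-S_M$ is likewise point-independent --- exactly the fact the paper uses silently when it writes $N_{q,\gamma}(G-\sum_{j=1}^{M}G_j;x)\leq\sum_{j>M}N_{q,\gamma}(G_j;x)$.
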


\begin{proof} It is enough to show that $\mathcal{H}^{p}_{q, \, \gamma}(\mathbb{R}^{n}, w)$ has the Riesz-Fisher property: given any sequence $\{ G_j \}$ in $\mathcal{H}^{p}_{q, \, \gamma}(\mathbb{R}^{n}, w)$ such that 
$$\sum_{j} \| G_j \|_{\mathcal{H}^{p}_{q, \, \gamma}(\mathbb{R}^{n}, w)}^{p} < \infty,$$
the series $\sum_{j} G_j$ converges in $\mathcal{H}^{p}_{q, \, \gamma}(\mathbb{R}^{n}, w)$. 
For $1 \leq l$ fix, we have
\begin{eqnarray*}
\left\| \sum_{j=l}^{k} N_{q, \, \gamma}(G_{j}; \, . ) \right\|_{L^{p}(\mathbb{R}^{n},w)}^{p} & \leq & \sum_{j=l}^{k} \left\| N_{q, \, \gamma}(G_{j}; \, . ) \right\|_{L^{p}(\mathbb{R}^{n},w)}^{p} \\
& \leq & \sum_{j=l}^{\infty} \| G_j \|_{\mathcal{H}^{p}_{q, \, \gamma}(\mathbb{R}^{n}, w)}^{p} =: \alpha_l < \infty,
\end{eqnarray*}
for all $k \geq l$, thus $$\int_{\mathbb{R}^{n}} \, \left( \alpha_{l}^{-1/p} \, \sum_{j=l}^{k} N_{q, \, \gamma}(G_{j}; \, x ) \right)^{p} 
w(x) \, dx$$  $$\leq \int_{\mathbb{R}^{n}} \left( \left\| \sum_{j=l}^{k} N_{q, \, \gamma}(G_{j}; \, . ) \right\|_{L^{p}(\mathbb{R}^{n},w)}^{-1} \, \sum_{j=l}^{k} N_{q, \, \gamma}(G_{j}; \, x ) \right)^{p} w(x) \, dx =1, \,\,\, \forall \, k \geq l$$ it follows from Fatou's lemma as $k \rightarrow \infty$ that
$$\int_{\mathbb{R}^{n}} \, \left( \alpha_{l}^{-1/p} \, \sum_{j=l}^{\infty} N_{q, \, \gamma}(G_{j}; \, x ) \right)^{p} w(x) \, dx \leq 1$$
thus
\begin{equation}
\left\|  \sum_{j=l}^{\infty} N_{q, \, \gamma}(G_{j}; \, \cdot) \right\|_{L^{p}(\mathbb{R}^{n},w)}^{p} \leq \alpha_{l} = \sum_{j=l}^{\infty} \| G_j \|_{\mathcal{H}^{p}_{q, \, \gamma}(\mathbb{R}^{n}, w)}^{p} < \infty, \,\,\,\, \forall \, l \geq 1  \label{serie}.
\end{equation}
Taking $l=1$ in (\ref{serie}) and since $w \in \mathcal{A}_{\infty}$, we obtain that $\sum_{j} N_{q, \, \gamma}(G_{j}; \, x)$ is finite a.e. $x \in \mathbb{R}^{n}$. Then, by $(i)$ of Lemma \ref{serie N}, the series $\sum_j G_j$ converges in $E_{k}^{q}$ to an element $G$. Now 
$$N_{q, \, \gamma}\left( G - \sum_{j=1}^{k} G_j; x \right) \leq \sum_{j=k+1}^{\infty} N_{q, \, \gamma} (G_j; x),$$
from this and (\ref{serie}), we get
$$\left\| G - \sum_{j=1}^{k} G_j \right\|_{\mathcal{H}^{p}_{q, \, \gamma}(\mathbb{R}^{n},w)}^{p} \leq \sum_{j=k+1}^{\infty} \| G_j \|_{\mathcal{H}^{p}_{q, \, \gamma}(\mathbb{R}^{n},w)}^{p},$$
since the right-hand side tends to $0$ as $k \rightarrow \infty$, the series $\sum_{j} G_j$ converges to $G$ in 
$\mathcal{H}^{p}_{q, \, \gamma}(\mathbb{R}^{n},w)$.
\end{proof}

The proof of the following result is an adaptation of the proof of Theorem 2  given in \cite{segovia}.

\begin{theorem} \label{example} Let $m \geq 1$ and $w_a(x)= |x|^{a}$ with $-n < a < n(s-1)$ and $1 < s < \infty$. If 
$\displaystyle{p \leq \frac{n + \min\{ a, 0\}}{2m+n/q}}$ and $p \leq 1$, then $\mathcal{H}^{p}_{q, \, 2m}(\mathbb{R}^{n}, w_a) = \{ 0 \}.$
\end{theorem}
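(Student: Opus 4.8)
The plan is to argue by contradiction, producing a pointwise lower bound for $N_{q,\,2m}(G;\cdot)$ at infinity that is incompatible with membership in $L^p(\mathbb{R}^n,w_a)$. Write $\gamma=2m=k+t$ with $k=2m-1$ and $t=1$, so that $\mathcal{H}^p_{q,\,2m}(\mathbb{R}^n,w_a)\subset E^q_{2m-1}$ and $G=0$ means that every representative of $G$ is a polynomial of degree at most $2m-1$. Suppose $G\in\mathcal{H}^p_{q,\,2m}(\mathbb{R}^n,w_a)$ with $G\neq 0$. The whole argument uses only the quotient seminorms $\|G\|_{q,\,Q}$ together with the elementary max–min inequality obtained by exchanging the infimum over representatives with the supremum over scales,
\[
N_{q,\,2m}(G;x)=\inf_{g\in G}\ \sup_{r>0}\ r^{-2m}\,|g|_{q,\,Q(x,r)}\ \geq\ \sup_{r>0}\ r^{-2m}\,\|G\|_{q,\,Q(x,r)} .
\]
Here I use $\inf_{g\in G}|g|_{q,\,Q}=\|G\|_{q,\,Q}$, which holds by definition of the quotient seminorm, so no good representative need be selected.

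First I would locate a cube on which $G$ is quantitatively nonzero. If $\|G\|_{q,\,Q}=0$ for every cube $Q$, then any representative $g$ agrees almost everywhere on each cube with a polynomial of degree at most $2m-1$; since polynomials agreeing on a set of positive measure coincide, these local polynomials patch into a single global one, forcing $G=0$ in $E^q_{2m-1}$, a contradiction. Hence there is a cube $Q_0=Q(x_0,r_0)$ with $c_0:=\|G\|_{q,\,Q_0}>0$. Next, for $|x|$ large I would choose $R=R(x)\approx|x|$ (concretely $R=2(|x-x_0|+r_0)$) so that $Q_0\subset Q(x,R)$. Restricting the average defining $|g|_{q,\,Q(x,R)}$ to $Q_0$ gives, for every $g\in G$,
\[
|g|_{q,\,Q(x,R)}\ \geq\ \Big(\tfrac{|Q_0|}{|Q(x,R)|}\Big)^{1/q}|g|_{q,\,Q_0}\ \geq\ \big(r_0/R\big)^{n/q}\,c_0 ,
\]
and taking the infimum over $g\in G$ yields $\|G\|_{q,\,Q(x,R)}\geq(r_0/R)^{n/q}c_0$. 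Combining this with the displayed max–min inequality,
\[
N_{q,\,2m}(G;x)\ \geq\ R^{-2m}\,\|G\|_{q,\,Q(x,R)}\ \geq\ c_0\,r_0^{n/q}\,R^{-(2m+n/q)}\ \gtrsim\ |x|^{-(2m+n/q)}
\]
for all $|x|\geq A$, with $A$ and the implied constant depending only on $G$.

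Finally I would test this lower bound against the weight. Since $-n<a$ guarantees $w_a\in\mathcal{A}_\infty$, the space is well defined, and the assumption $G\in\mathcal{H}^p_{q,\,2m}(\mathbb{R}^n,w_a)$ forces
\[
\|G\|_{\mathcal{H}^p_{q,\,2m}(\mathbb{R}^n,w_a)}^{p}=\int_{\mathbb{R}^n}N_{q,\,2m}(G;x)^{p}\,|x|^{a}\,dx\ \gtrsim\ \int_{|x|\geq A}|x|^{\,a-p(2m+n/q)}\,dx .
\]
In polar coordinates the tail integral diverges precisely when $a-p(2m+n/q)+n\geq 0$, i.e. when $p(2m+n/q)\leq n+a$; the hypothesis $p\leq\frac{n+\min\{a,0\}}{2m+n/q}$ implies $p(2m+n/q)\leq n+\min\{a,0\}\leq n+a$, so the integral is infinite, contradicting $N_{q,\,2m}(G;\cdot)\in L^p(\mathbb{R}^n,w_a)$. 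Therefore $G=0$, which proves the claim. I expect the one delicate point to be the quantitative nonvanishing step — extracting a fixed cube $Q_0$ with $c_0>0$ and justifying the polynomial patching — together with keeping the geometric constants in $R\approx|x|$ uniform; the max–min interchange and the final integrability computation are routine. I would also note that the argument in fact yields divergence for every $p\leq\frac{n+a}{2m+n/q}$, a range containing the stated one and sharp when $a\leq 0$.
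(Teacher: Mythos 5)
Your proof is correct and takes essentially the same route as the paper: both locate a cube on which $G$ is quantitatively far from $\mathcal{P}_{2m-1}$ (the paper asserts $\int_Q |g-P|^q\,dy \geq C>0$ for all $P\in\mathcal{P}_{2m-1}$ as ``easy to check,'' while you justify it via the finite-dimensional attainment and polynomial patching argument), then deduce the pointwise lower bound $N_{q,2m}(G;x)\gtrsim |x|^{-(2m+n/q)}$ for large $|x|$ by comparing the average over a large cube $Q(x,R)$, $R\approx|x|$, with that over the fixed cube, and conclude by divergence of $\int_{|x|\geq A}|x|^{a-p(2m+n/q)}\,dx$. Your max--min interchange with the quotient seminorm merely repackages the paper's step $\delta^{-2m}|f|_{q,Q(x,\delta)}\geq C|x|^{-2m-n/q}$ for every representative $f=g-P$, and your closing observation that the argument in fact covers all $p\leq (n+a)/(2m+n/q)$ is equally implicit in the paper's computation.
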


\begin{proof} We observe that for $1 < s < \infty$ and $-n < a < n(s-1)$ the nonnegative function $w_a(x) = |x|^{a}$, 
$x \in \mathbb{R}^{n} \setminus \{ 0 \}$, is a weight in the Muckenhoupt class $\mathcal{A}_{s}$ (see p. 506 in \cite{grafakos}). Let $F \in \mathcal{H}^{p}_{q, \, 2m}(\mathbb{R}^{n}, w_a)$  and assume $F \neq 0$. Then there exists $g \in F$ that is not a polynomial of degree less or equal to $2m-1$. It is easy to check that there exist a positive constant $C$ and a cube $Q = Q(0,r)$ with $r>1$ such that
$$\int_{Q} |g(y) - P(y)|^{q} \, dy \geq C > 0,$$
for every $P \in \mathcal{P}_{2m-1}$.

Let $x$ be a point such that $|x| > \sqrt{n}r$ and let $\delta= 4 |x|$. Then $Q(0,r) \subset Q(x, \delta)$. If $f \in F$, then $f = g - P$ for some $P \in \mathcal{P}_{2m-1}$ and
$$\delta^{-2m}|f|_{q, Q(x, \delta)} \geq C |x|^{-2m-n/q}.$$
So $N_{q,2m}(F;x) \geq C \, |x|^{-2m-n/q}$, for $|x| > \sqrt{n}r$. Since $\displaystyle{p \leq \frac{n + \min\{ a, 0\}}{2m+n/q}}$ and 
$-n  < a < n(s-1)$, we have
$$\int_{\mathbb{R}^{n}} \left[ N_{q,2m}(F; x) \right]^{p} w_{a}(x) \, dx \geq C \, \int_{|x| > \sqrt{n}r} |x|^{-(2m+n/q)p} |x|^{a} \, dx = \infty.$$
Then, we get a contradiction. Thus $\mathcal{H}^{p}_{q, 2m}(\mathbb{R}^{n}, w_a) = \{0\}$, if 
$\displaystyle{p \leq \frac{n + \min\{ a, 0\}}{2m+n/q}}$.
\end{proof}

\subsection{The iterated Laplacian}

The Laplace operator or Laplacian $\Delta$ on $\mathbb{R}^{n}$ is defined by
\[
\Delta = \sum_{j=1}^{n} \frac{\partial^{2}}{\partial x_j^{2}}.
\]
For $m \in \mathbb{N}$ we define the iterated Laplacian by $\Delta^{m} := \Delta \circ \cdot \cdot \cdot \circ \Delta$ ($m$ times). 
Given $g \in \mathcal{S}'(\mathbb{R}^{n})$, the iterated laplacian $\Delta^{m}$ acts on the distribution $g$ by means of the formula
\[
\left( \Delta^{m} g, \varphi \right) = \left(  g,  \Delta^{m} \varphi \right), \,\,\, \forall \, \varphi \in \mathcal{S}(\mathbb{R}^{n}).
\]
Thus $\Delta^{m}g \in \mathcal{S}'(\mathbb{R}^{n})$ when $g \in \mathcal{S}'(\mathbb{R}^{n})$.

Let $\Phi$ be the function defined on $\mathbb{R}^{n} \setminus \{ 0 \}$ by
\begin{equation}
\Phi(x) = \left\{ \begin{array}{cc}
                 C_1 \, |x|^{2m-n} \ln{|x|}, & \text{if} \,\, n \,\, \text{is even and} \,\, 2m-n \geq 0  \\
                 C_2 \, |x|^{2m-n}, & \text{otherwise}
               \end{array} \right. , \label{func h}
\end{equation}
a such function is a fundamental solution of $\Delta^{m}$ (see \cite{Gelfand}, p. 201-202); i.e.: $\Delta^{m} \Phi = \delta$ in 
$\mathcal{S}'(\mathbb{R}^{n})$.

\begin{lemma} \label{h function}
$($Lemma 8 in \cite{segovia}$)$ If $\Phi$ is the kernel defined in $($\ref{func h}$)$ and $|\alpha|=2m$, then $(\partial^{\alpha} \Phi)(x)$ is a $C^{\infty}$ homogeneous function of degree $-n$ on $\mathbb{R}^{n} \setminus \{0\}$ such that
\[
\int_{|x|=1} \, (\partial^{\alpha} \Phi)(x) \, dx = 0.
\]
\end{lemma}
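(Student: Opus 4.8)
The plan is to verify the three assertions—smoothness, homogeneity of degree $-n$, and the spherical cancellation—separately, carrying the two branches of the definition (\ref{func h}) along in parallel. Smoothness is immediate: on $\mathbb{R}^{n}\setminus\{0\}$ each of $|x|^{2m-n}$ and $|x|^{2m-n}\ln|x|$ is a composition of the smooth map $x\mapsto|x|$ with a smooth function on $(0,\infty)$, so $\Phi\in C^{\infty}(\mathbb{R}^{n}\setminus\{0\})$ and hence so is every $\partial^{\alpha}\Phi$.

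For the homogeneity, the ``otherwise'' branch is the easy one: $\Phi=C_{2}|x|^{2m-n}$ is homogeneous of degree $2m-n$, and differentiating the scaling identity $\Phi(tx)=t^{2m-n}\Phi(x)$ a total of $|\alpha|=2m$ times gives that $\partial^{\alpha}\Phi$ is homogeneous of degree $2m-n-2m=-n$. The logarithmic branch is the delicate case: there $n$ is even and $2m-n\ge 0$, so $v(x):=|x|^{2m-n}=(x_{1}^{2}+\cdots+x_{n}^{2})^{(2m-n)/2}$ is a homogeneous \emph{polynomial} of degree $2m-n$. Writing $\Phi=C_{1}\,v\,\ln|x|$ and using $\ln|tx|=\ln t+\ln|x|$, I would differentiate the relation $\Phi(tx)=t^{2m-n}\ln t\,v(x)+t^{2m-n}\Phi(x)$ in $x$ via $\partial^{\alpha}_{x}[\Phi(tx)]=t^{2m}(\partial^{\alpha}\Phi)(tx)$. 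The key point is that $|\alpha|=2m$ strictly exceeds the degree $2m-n$ of the polynomial $v$, so $\partial^{\alpha}v\equiv 0$; the logarithmic term thus drops out and one is left with $(\partial^{\alpha}\Phi)(tx)=t^{-n}(\partial^{\alpha}\Phi)(x)$, the desired degree $-n$ homogeneity.

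The spherical mean is the heart of the matter. I would fix an index $i$ with $\alpha_{i}\ge 1$ and write $\alpha=\beta+e_{i}$, where $e_{i}$ is the multi-index with a $1$ in the $i$-th slot, so that $\partial^{\alpha}\Phi=\partial_{i}u$ with $u:=\partial^{\beta}\Phi$ and $|\beta|=2m-1$. Running the same scaling computation one order lower shows $u$ is homogeneous of degree $1-n$ (in the logarithmic branch the polynomial $v$ is again annihilated, since $|\beta|=2m-1>2m-n$, which holds because $n$ is even, hence $n\ge 2$). On the annulus $A_{R}=\{\,1\le|x|\le R\,\}$, which avoids the origin so that $u\in C^{\infty}(\overline{A_{R}})$, I would apply the divergence theorem to the vector field with $i$-th component $u$ and all others zero, obtaining
\[
\int_{A_{R}}\partial_{i}u\,dx=\int_{|x|=R}u\,\frac{x_{i}}{R}\,d\sigma-\int_{|x|=1}u\,x_{i}\,d\sigma .
\]
By the degree $1-n$ homogeneity of $u$, the substitution $x=R\theta$ shows each boundary integral is scale-invariant and equals $\int_{|x|=1}u\,x_{i}\,d\sigma$, so the right-hand side vanishes. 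On the other hand, polar coordinates together with the degree $-n$ homogeneity of $\partial^{\alpha}\Phi$ give $\int_{A_{R}}\partial^{\alpha}\Phi\,dx=(\ln R)\int_{|x|=1}\partial^{\alpha}\Phi\,d\sigma$. Comparing the two evaluations for any $R>1$ forces $\int_{|x|=1}\partial^{\alpha}\Phi\,d\sigma=0$, which is the asserted cancellation.

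The main obstacle I anticipate is the bookkeeping in the logarithmic branch: one must check that the polynomial $|x|^{2m-n}$ is genuinely killed by the relevant derivatives both at order $2m$ (for the homogeneity of $\partial^{\alpha}\Phi$) and at order $2m-1$ (for the homogeneity of $u$), since this is exactly what makes the scaling identities collapse to clean homogeneity and hence keeps the two boundary terms scale-invariant. Everything else is a routine combination of the divergence theorem with the polar-coordinate integration formula for kernels homogeneous of degree $-n$.
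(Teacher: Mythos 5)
Your proof is correct, and it is worth noting that the paper itself offers no argument for this statement at all: it is imported verbatim as Lemma 8 of Gatto--Jim\'enez--Segovia \cite{segovia}, so you have supplied a complete proof where the paper has only a citation. Your route is the standard one for this classical fact, and all the delicate points are handled properly: in the logarithmic branch the hypothesis that $n$ is even makes $v(x)=|x|^{2m-n}$ a genuine homogeneous polynomial of degree $2m-n$, and your observation that both $|\alpha|=2m$ and $|\beta|=2m-1$ strictly exceed $2m-n$ (the latter because $n$ even forces $n\ge 2$) is exactly what kills the $t^{2m-n}\ln t\,\partial^{\beta}v$ term in the differentiated scaling identity, yielding clean homogeneity of degrees $-n$ and $1-n$ respectively; note this works even in the borderline case $2m-n=0$, where $v\equiv 1$ and $|\beta|\ge 1$. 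The cancellation step is also sound: writing $\partial^{\alpha}\Phi=\partial_i u$ with $u=\partial^{\beta}\Phi$ homogeneous of degree $1-n$, the two boundary integrals in the divergence theorem on the annulus $\{1\le|x|\le R\}$ are equal by the substitution $x=R\theta$ (the factors $R^{1-n}$ and $R^{n-1}$ cancel against $x_i/R=\theta_i$), while polar coordinates and degree $-n$ homogeneity give $\int_{A_R}\partial^{\alpha}\Phi\,dx=(\ln R)\int_{|x|=1}\partial^{\alpha}\Phi\,d\sigma$, and letting $R>1$ forces the spherical mean to vanish. Since $u$ is $C^{\infty}$ on a neighborhood of the closed annulus, the use of the divergence theorem is legitimate, and the argument is complete as written.
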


Given a bounded function $a(\cdot)$ with compact support, its potential $b$, defined as 
\[
b(x) = \int_{\mathbb{R}^{n}} \Phi(x-y) a(y) dy,
\]
is a locally bounded function and $\Delta^{m} b = a$ in the sense of distributions. For these potentials, we have the following two results.

\begin{lemma} \label{b function} Let $a(\cdot)$ be an $(p,p_{0},d)-$atom with $d =\max\left\{\lfloor n(\frac{q_w}{p} -1) \rfloor, 2m-1 \right\}$ and assume that $Q(x_0, r)$ is the cube containing the support of $a(\cdot)$ in the definition of $(p,p_{0},d)-$atom. If $$b(x) = \int_{\mathbb{R}^{n}} \Phi(x-y) a(y) dy,$$ then for $|x - x_0| \geq \sqrt{n}r$ and every multiindex $\alpha$, there exists $C_{\alpha}$ such that $$\left| (\partial^{\alpha}b)(x) \right| \leq C_{\alpha} \, r^{2m+n} [w(Q)]^{-1/p} |x - x_0|^{-n-\alpha}$$ holds.
\end{lemma}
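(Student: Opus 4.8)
The plan is to differentiate under the integral sign and then reduce everything to a single Taylor-expansion estimate that exploits the vanishing moments of the atom. First I would record the elementary geometric fact that for $|x-x_0|\geq\sqrt{n}\,r$ and $y\in Q=Q(x_0,r)$ one has $|x-y|\geq|x-x_0|-|y-x_0|\geq\frac{\sqrt{n}}{2}r>0$, so that the argument $x-y$ stays away from the singularity of $\Phi$ as $y$ ranges over the support of $a$. Consequently $\Phi(x-\cdot)$ is smooth on a neighbourhood of $\supp(a)$, and since $a$ has compact support and $a\in L^{p_0}\subset L^{1}(Q)$ one may differentiate under the integral to get $(\partial^{\alpha}b)(x)=\int_{Q}(\partial^{\alpha}\Phi)(x-y)\,a(y)\,dy$.

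The key idea is to bring in the moment condition $(a3)$. Since $a$ annihilates all polynomials of degree at most $d\geq 2m-1$, I would subtract from $(\partial^{\alpha}\Phi)(x-y)$ its Taylor polynomial, in the variable $y$, of degree $2m-1$ centered at $x_0$. That polynomial has degree $\leq 2m-1\leq d$, so it integrates to zero against $a$, and therefore $(\partial^{\alpha}b)(x)=\int_{Q}R(x,y)\,a(y)\,dy$, where by Taylor's theorem with Lagrange remainder $R(x,y)=\sum_{|\eta|=2m}\frac{(-1)^{|\eta|}}{\eta!}\,(\partial^{\alpha+\eta}\Phi)(x-\xi)\,(y-x_0)^{\eta}$ for some $\xi$ on the segment joining $x_0$ and $y$.

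Now comes the point where Lemma \ref{h function} enters. For each $\eta$ with $|\eta|=2m$ I would write $\partial^{\alpha+\eta}\Phi=\partial^{\alpha}(\partial^{\eta}\Phi)$; by Lemma \ref{h function} the inner factor $\partial^{\eta}\Phi$ is $C^{\infty}$ and homogeneous of degree $-n$ on $\mathbb{R}^{n}\setminus\{0\}$, hence $\partial^{\alpha+\eta}\Phi$ is $C^{\infty}$ and homogeneous of degree $-n-|\alpha|$ there, giving $|(\partial^{\alpha+\eta}\Phi)(z)|\leq C\,|z|^{-n-|\alpha|}$ for $z\neq 0$. The geometric estimate above yields $|\xi-x_0|\leq|y-x_0|\leq\frac{\sqrt{n}}{2}r\leq\frac12|x-x_0|$, whence $|x-\xi|\geq\frac12|x-x_0|$ and so $|(\partial^{\alpha+\eta}\Phi)(x-\xi)|\lesssim|x-x_0|^{-n-|\alpha|}$. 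Combining this with $|y-x_0|^{2m}\lesssim r^{2m}$ gives $|R(x,y)|\lesssim r^{2m}\,|x-x_0|^{-n-|\alpha|}$, uniformly in $y\in Q$.

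Finally I would estimate $\int_{Q}|a(y)|\,dy$ by Hölder's inequality together with the size condition $(a2)$, obtaining $\int_{Q}|a|\leq\|a\|_{L^{p_0}}|Q|^{1-1/p_0}\leq|Q|[w(Q)]^{-1/p}=r^{n}[w(Q)]^{-1/p}$ (the case $p_0=\infty$ being immediate). Inserting the remainder bound then gives $|(\partial^{\alpha}b)(x)|\lesssim r^{2m}|x-x_0|^{-n-|\alpha|}\cdot r^{n}[w(Q)]^{-1/p}=C_{\alpha}\,r^{2m+n}[w(Q)]^{-1/p}|x-x_0|^{-n-|\alpha|}$, which is the asserted inequality (with the exponent in the statement read as $-n-|\alpha|$). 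The one delicate point, and the step I expect to require the most care, is the logarithmic factor in $\Phi$ when $n$ is even and $2m-n\geq 0$: a priori the derivatives of $|x|^{2m-n}\ln|x|$ might retain a logarithm and destroy exact homogeneity. This is precisely what Lemma \ref{h function} rules out, since every derivative used here is of order at least $2m$, i.e. a derivative of a genuinely $(-n)$-homogeneous, logarithm-free function $\partial^{\eta}\Phi$ with $|\eta|=2m$. I would also stress that using only the moments up to order $2m-1$, rather than the full order $d$, is exactly what makes the powers of $r$ and of $|x-x_0|$ come out as stated.
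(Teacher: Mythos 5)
Your proof is correct and takes essentially the same route as the paper's: expand $(\partial^{\alpha}\Phi)(x-\cdot)$ in a Taylor polynomial of degree $2m-1$ about $x_0$ (annihilated by the moment condition $(a3)$ since $d\geq 2m-1$), bound the Lagrange remainder using that $\partial^{\alpha+\beta}\Phi$ with $|\beta|=2m$ is homogeneous of degree $-n-|\alpha|$ (Lemma \ref{h function}) together with $|x-\xi|\geq \frac{1}{2}|x-x_0|$, and finish with H\"older's inequality and the size condition $(a2)$. Your explicit justification of differentiating under the integral sign and your remark on why the logarithmic factor in $\Phi$ causes no harm are details the paper leaves implicit, but the argument is the same.
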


\begin{proof} Since $a(\cdot)$ has vanishing moments up to the order $d \geq 2m-1$, we have
$$(\partial^{\alpha}b)(x) = \int_{Q(x_0, r)} \, (\partial^{\alpha} \Phi)(x-y) a(y) \, dy$$
$$= \int_{Q(x_0, r)} \, \left[(\partial^{\alpha} \Phi)(x-y) - \sum_{|\beta| \leq 2m-1} (\partial^{\alpha + \beta} \Phi)(x-x_0) \frac{(x_0 -y)^{\beta}}{\beta !} \right] a(y) \, dy$$
$$= \int_{Q(x_0, r)} \, \left[\sum_{|\beta| = 2m} (\partial^{\alpha + \beta} \Phi)(x- \xi) \frac{(x_0 -y)^{\beta}}{\beta !} \right] a(y) \, dy$$
where $\xi$ is a point between $y$ and $x_0$. If $|x - x_0| \geq \sqrt{n}r$ it follows that $|x - \xi| \geq \frac{|x - x_0|}{2}$ since 
$| x_0 - \xi| \leq \frac{\sqrt{n}}{2} r$. Taking into account that for $|\beta| = 2m$, $\partial^{\alpha + \beta} \Phi$ is a homogeneous function of degree $-n-\alpha$, we obtain
\begin{eqnarray*}
| (\partial^{\alpha}b)(x) | & \leq & C \, r^{|\beta|} \int_{Q(x_0, r)} |x- \xi|^{-n-\alpha} |a(y)| dy \\\\
& \leq & C \, r^{|\beta|} \| a \|_{p_0} |Q|^{1-1/p_0} |x - x_0|^{-n-\alpha} \\\\
& \leq & C \, r^{2m + n} [w(Q)]^{-1/p}  |x - x_0|^{-n-\alpha}.
\end{eqnarray*}
\end{proof}

The following pointwise inequality is crucial to obtain the theorem \ref{main result} below.

\begin{proposition} \label{pointwise ineq}
Let $a(\cdot)$ be a $w-(p, p_{0}, d)$ atom with $d =\max\left\{\lfloor n(\frac{q_w}{p} -1) \rfloor, 2m-1 \right\}$
and assume that $Q=Q(x_0, r)$ is the cube containing the support of $a(\cdot)$ in the definition of $w-(p, p_{0}, d)$ atom.
 
If $b(x) = \int_{\mathbb{R}^{n}} \Phi(x-y) \, a(y) \, dy$, then for all $x \in \mathbb{R}^{n}$, all $0< \mu < 2m$ and all $q > 1$
\begin{equation} \label{N estimate}
N_{q, 2m}(B; x)  \lesssim [w(Q)]^{-1/p} \left[M(\chi_{Q})(x) \right]^{\frac{2m + n/q - \mu}{n}} + \chi_{4\sqrt{n}Q}(x) M(a)(x)
\end{equation}
\[
+ \, \chi_{4\sqrt{n}Q}(x) [M(M^{q}(a))(x)]^{1/q} + \chi_{4\sqrt{n}Q}(x) \sum_{|\alpha|=2m} T^{*}_{\alpha}(a)(x),
\]
where $B$ is the class of $b$ in $E^{q}_{2m-1}$, $T^{*}_{\alpha}(a) (x) = \sup_{\epsilon >0} \left|\int_{|x-y|> \epsilon} \, (\partial^{\alpha} \Phi)(x-y) a(y) \, dy\right|$ and $M$ is the Hardy-Littlewood maximal operator.
\end{proposition}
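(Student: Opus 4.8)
The plan is to bound $N_{q,2m}(B;x)$ at each fixed $x$ by exhibiting one convenient representative of the class $B$ and estimating its $\eta_{q,2m}$. Since $\partial^{\alpha}\Phi$ is homogeneous of degree $2m-n-|\alpha|>-n$ for $|\alpha|\le 2m-1$, it is locally integrable, so $b\in C^{2m-1}(\mathbb{R}^{n})$ and we may take $g_{x}=b-T^{x}_{2m-1}[b]$, the Taylor remainder of $b$ at $x$ of order $2m-1$. This differs from $b$ by a polynomial of degree $\le 2m-1$, hence $g_{x}\in B$ and $N_{q,2m}(B;x)\le\eta_{q,2m}(g_{x};x)=\sup_{\rho>0}\rho^{-2m}|g_{x}|_{q,Q(x,\rho)}$. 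The integral form of Taylor's remainder gives, for $y\in Q(x,\rho)$,
\[ |g_{x}(y)|\le C\rho^{2m}\int_{0}^{1}\max_{|\beta|=2m}\bigl|(\partial^{\beta}b)(x+t(y-x))\bigr|\,dt, \]
so the whole problem reduces to pointwise control of $\partial^{\beta}b$, $|\beta|=2m$, along segments emanating from $x$. I would split according to whether $x\notin 4\sqrt{n}Q$ (far) or $x\in 4\sqrt{n}Q$ (near), and in each regime split the range of scales $\rho$.

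In the far regime I would use Lemma \ref{b function}, which gives $|(\partial^{\beta}b)(z)|\lesssim r^{2m+n}[w(Q)]^{-1/p}|z-x_{0}|^{-n-2m}$ whenever $|z-x_{0}|\ge\sqrt{n}r$. For small scales $\rho\lesssim|x-x_{0}|$ the cube $Q(x,\rho)$ and the Taylor segments stay in the region $|z-x_{0}|\gtrsim|x-x_{0}|$, so the displayed bound yields $\rho^{-2m}|g_{x}|_{q,Q(x,\rho)}\lesssim r^{2m+n}[w(Q)]^{-1/p}|x-x_{0}|^{-n-2m}$. Since $M(\chi_{Q})(x)\approx (r/|x-x_{0}|)^{n}$ for $x\notin 4\sqrt{n}Q$, one has $[M(\chi_{Q})(x)]^{(2m+n/q-\mu)/n}\approx (r/|x-x_{0}|)^{\,2m+n/q-\mu}$, and the elementary comparison $(r/|x-x_{0}|)^{\,n-n/q+\mu}\le 1$ (the exponent is positive because $q>1$ and $\mu>0$, while $r\le|x-x_{0}|$) shows this small-scale bound is dominated by the first term on the right of (\ref{N estimate}). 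For large scales $\rho\gtrsim|x-x_{0}|$ I would use that Lemma \ref{b function} with $\alpha=0$ makes $b$ and all its derivatives decay like $|z-x_{0}|^{-n-|\beta|}$; hence the coefficients $(\partial^{\beta}b)(x)$ of the subtracted Taylor polynomial are small, and together with the gain from $\rho^{-2m}$ and the $L^{q}$ average (which contributes an extra $\rho^{-n/q}$) both the $b$-part and the polynomial part of $g_{x}$ tend to $0$ and stay below the first term. This closes the far regime with the first term alone.

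In the near regime the large scales $\rho\gtrsim r$ are handled as above: once $Q(x,\rho)\supseteq Q$ the decay estimate applies, and since $M(\chi_{Q})(x)\approx 1$ there the bound is $\lesssim[w(Q)]^{-1/p}$, i.e. the first term. The crux is the small scales $\rho\lesssim r$, where the singularity of $\partial^{\beta}\Phi$ is felt. Here I would exploit that, by Lemma \ref{h function}, $\partial^{\beta}\Phi$ is homogeneous of degree $-n$ with vanishing spherical mean, so $a\mapsto\partial^{\beta}\Phi\ast a$ is a Calder\'on--Zygmund operator. Splitting $a=a\chi_{cQ(x,\rho)}+a\chi_{(cQ(x,\rho))^{c}}$ into its local and distant parts relative to the scale $\rho$, the distant part of $(\partial^{\beta}b)(z)$ is a truncated singular integral which, for $z$ close to $x$, is dominated by $T^{*}_{\beta}(a)(x)$; the local part is controlled pointwise by $M(a)$, using the zero-mean cancellation of Lemma \ref{h function} to tame the principal value. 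Inserting these into the Taylor estimate, applying Jensen's inequality in $t$, and performing the change of variables $z=x+t(y-x)$ — under which $|Q(x,\rho)|^{-1}\int_{Q(x,\rho)}H(x+t(y-x))\,dy=|Q(x,t\rho)|^{-1}\int_{Q(x,t\rho)}H(z)\,dz\le M(H)(x)$ for $H\ge 0$ — turns the $L^{q}$ averages of the local part into $M(a)(x)$ and $[M((Ma)^{q})(x)]^{1/q}=[M(M^{q}(a))(x)]^{1/q}$, while the distant part contributes $\sum_{|\beta|=2m}T^{*}_{\beta}(a)(x)$. Summing over $|\beta|=2m$ and inserting $\chi_{4\sqrt{n}Q}$ produces exactly the last three terms of (\ref{N estimate}).

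The main obstacle is this near-regime small-scale estimate: one must decompose $(\partial^{\beta}b)(z)$ for $z$ on the Taylor segments near $x$ into a truncated singular integral slaved to the scale $\rho$ (so that it is genuinely bounded by $T^{*}_{\beta}(a)(x)$, with no extra maximal operator) and a local remainder, and then show the remainder is dominated by $M(a)$ and by $[M(M^{q}(a))]^{1/q}$ after the segment-to-cube change of variables; the vanishing spherical mean from Lemma \ref{h function} is precisely what makes the principal value converge and supplies the cancellation needed for these bounds. A secondary, more technical point is the intermediate far scale $\rho\approx|x-x_{0}|$, where $Q(x,\rho)$ straddles $\supp a$: there I would split the integral defining $g_{x}$ over the near and far parts of $\supp a$ and verify directly that both pieces stay below the first term of (\ref{N estimate}).
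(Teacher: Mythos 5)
Your plan is correct and is essentially the same argument the paper itself invokes: the paper proves this proposition by citing the proof of Proposition 15 in \cite{rocha1} (following Gatto--Jim\'enez--Segovia), which proceeds exactly as you propose --- take the order-$(2m-1)$ Taylor remainder of $b$ at $x$ as the representative of $B$, split into $x \notin 4\sqrt{n}Q$ versus $x \in 4\sqrt{n}Q$ and into small versus large scales $\rho$, use the decay of Lemma \ref{b function} to dominate the far regime by $[w(Q)]^{-1/p}\left[M(\chi_{Q})(x)\right]^{\frac{2m+n/q-\mu}{n}}$, and handle the near small scales by decomposing $\partial^{\beta}b$ ($|\beta|=2m$) into a truncated singular integral recentered at $x$ (yielding $T^{*}_{\alpha}(a)(x)$ at the cost of $M(a)(x)$, via the cancellation in Lemma \ref{h function}) plus a local part controlled through the segment-to-cube change of variables by $M(a)$ and $[M(M^{q}(a))]^{1/q}$. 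Your identification of the near-regime small-scale estimate as the crux, and your treatment of it, coincide with the cited proof, so there is no substantive deviation to report.
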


\begin{proof} We point out that the argument used in the proof of \cite[Proposition 15]{rocha1} to obtain the pointwise inequality
\cite[(4)]{rocha1} works in this setting as well, but considering now the conditions $(a1)$, $(a2)$ and $(a3)$ in Definition \ref{atoms} of 
$w-(p, p_0, d)$ atom (these conditions are similar to those of the atoms in variable context, see \cite{nakai} p. 3669). Then, this observation and Lemma \ref{b function} allow us to get (\ref{N estimate}).
\end{proof}

\section{Main result}

We observe that if $G \in \mathcal{H}^{p}_{q, \, 2m}(\mathbb{R}^{n}, w)$, then $N_{q, \, 2m}(G; x_0) < \infty,$ for some $x_0 \in \mathbb{R}^{n}$. By Lemma \ref{g representante} there exists $g \in G$ such that $N_{q, \, 2m}(G; x_0) = \eta_{q, \, 2m}(g; x_0)$; from Proposition \ref{g distribucion} it follows that $g  \in \mathcal{S}'(\mathbb{R}^{n})$. So $\Delta^{m}g$ is well defined in sense of distributions. On the other hand, since any two representatives of $G$ differ in a polynomial of degree at most $2m-1$, we get that $\Delta^{m} g$ is independent of the representative $g \in G$ chosen. Therefore, for $G \in \mathcal{H}^{p}_{q, \, 2m}(\mathbb{R}^{n}, w)$, we 
define $\Delta^{m} G$ as the distribution $\Delta^{m}g$, where $g$ is any representative of $G$.

\subsection{Solution of the equation $\Delta^{m} F =f$ for $f \in H^{p}(\mathbb{R}^{n}, w)$}

In this section, we shall prove that the space $\mathcal{H}^{p}_{q, \, 2m}(\mathbb{R}^{n}, w)$ is the solution set of the equation
\[
\Delta^{m} F = f, \,\,\,\, \text{for} \,\, f \in H^{p}(\mathbb{R}^{n}, w).
\] 
This is contained in the following two results.

\begin{theorem}\label{main result} Let $1 < q < \infty$, \, $n (2m + n/q)^{-1} <  p  \leq 1$ and let $0 < \mu  <  2m$  be such that 
$n  <  (2m+n/q - \mu) \, p$. If $w \in \mathcal{A}_{\frac{(2m+n/q - \mu)}{n}p}$, then the iterated Laplace operator $\Delta^{m}$ is a surjective mapping from $\mathcal{H}^{p}_{q, 2m}(\mathbb{R}^{n}, w)$ onto $H^{p}(\mathbb{R}^{n}, w)$. Moreover, there exist two positive constants $C_1$ and $C_2$ such that
\begin{equation} \label{double ineq 2}
C_1 \|G \|_{\mathcal{H}^{p}_{q, 2m}(\mathbb{R}^{n}, w)} \leq \| \Delta^{m}G \|_{H^{p}(\mathbb{R}^{n}, w)} \leq C_2 
\|G \|_{\mathcal{H}^{p}_{q, 2m} (\mathbb{R}^{n}, w)}
\end{equation}
hold for all $G \in \mathcal{H}^{p}_{q, 2m}(\mathbb{R}^{n}, w)$.
\end{theorem}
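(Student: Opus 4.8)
The plan is to prove the two inequalities in \eqref{double ineq 2} separately, obtaining surjectivity as a byproduct of the left-hand estimate. For the \emph{right-hand inequality} $\| \Delta^{m} G \|_{H^{p}(\mathbb{R}^{n}, w)} \leq C_2 \|G \|_{\mathcal{H}^{p}_{q, 2m}(\mathbb{R}^{n}, w)}$, I would argue by a pointwise domination of the grand maximal function of $\Delta^m g$ (for any representative $g \in G$) by the Calder\'on maximal function $N_{q, 2m}(G; \cdot)$. The key observation is that, for $\varphi \in \mathcal{F}_N$ and $t>0$ with $\varphi_t := t^{-n}\varphi(\cdot/t)$, one has $\varphi_t \ast \Delta^m g = t^{-2m}\psi_t \ast g$ where $\psi := \Delta^m \varphi$ has vanishing moments up to order $2m-1$. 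Writing $\psi_t \ast h(x) = \int \psi_t(x-y)[h(y)-P(y)]\,dy$ for a representative $h \in G$ and a suitable $P \in \mathcal{P}_{2m-1}$, splitting into dyadic annuli and using the rapid decay of $\psi$ (here $N$ large and fixed is essential), I would obtain $t^{-2m}|\psi_t \ast h(x)| \lesssim \eta_{q,2m}(h;x)$. Taking the supremum over $t$ and $\varphi$ and then the infimum over $h \in G$ yields $\mathcal{M}_{\mathcal{F}_N}(\Delta^m g)(x) \lesssim N_{q,2m}(G;x)$ pointwise; integrating in $L^p(w)$ gives the bound. This direction uses only $w \in \mathcal{A}_\infty$.

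For \emph{surjectivity together with the left-hand inequality}, I would start from $f \in H^{p}(\mathbb{R}^{n}, w)$ and invoke Theorem \ref{atomic decomp} and Remark \ref{w-atomic decomp}, writing $f = \sum_j \lambda_j a_j$ with $w$-$(p,p_0,d)$ atoms $a_j$ supported on cubes $Q_j$, where $d = \max\{\lfloor n(\frac{q_w}{p} -1) \rfloor, 2m-1\}$ and $\sum_j |\lambda_j|^p \lesssim \|f\|_{H^{p}(\mathbb{R}^{n}, w)}^p$. Setting $b_j(x) = \int_{\mathbb{R}^{n}} \Phi(x-y) a_j(y)\,dy$ and letting $B_j$ be its class in $E^q_{2m-1}$, Lemma \ref{b function} and Proposition \ref{pointwise ineq} control $N_{q,2m}(B_j;\cdot)$; after checking that $\sum_j N_{q,2m}(B_j;x_0) < \infty$ at some $x_0$, Lemma \ref{serie N} shows $G := \sum_j \lambda_j B_j$ converges in $E^q_{2m-1}$. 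Since $\Delta^m b_j = a_j$, it follows that $\Delta^m G = f$, which already gives surjectivity.

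It remains to estimate $\|G\|_{\mathcal{H}^{p}_{q, 2m}(\mathbb{R}^{n}, w)}$. Using the subadditivity $N_{q,2m}(G;x) \leq \sum_j |\lambda_j| N_{q,2m}(B_j;x)$ and $p \leq 1$, it suffices to bound $\int_{\mathbb{R}^{n}} [N_{q,2m}(B_j;x)]^p w(x)\,dx$ uniformly in $j$ by inserting \eqref{N estimate} and estimating its four terms. The \emph{global} term $[w(Q_j)]^{-1/p}[M(\chi_{Q_j})]^{(2m+n/q-\mu)/n}$ is where the hypothesis enters: its $L^p(w)$-norm to the $p$-th power equals $[w(Q_j)]^{-1}\int [M(\chi_{Q_j})]^{s_0} w$ with $s_0 = \frac{(2m+n/q-\mu)}{n}p$, and the assumption $n < (2m+n/q-\mu)p$ ensures $s_0 > 1$, so that $w \in \mathcal{A}_{s_0}$ makes $M$ bounded on $L^{s_0}(\mathbb{R}^{n}, w)$ and gives $\int [M(\chi_{Q_j})]^{s_0} w \lesssim w(Q_j)$, a contribution $\lesssim 1$. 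The three \emph{local} terms are supported on $4\sqrt{n}Q_j$; using the normalization $(a2)$, the $L^{p_0}$- and $L^q$-boundedness of $M$ and of the maximal singular integrals $T^*_\alpha$ (whose kernels $\partial^\alpha \Phi$ satisfy the cancellation of Lemma \ref{h function}), together with the reverse H\"older condition $w \in RH_{(p_0/p)'}$ from Remark \ref{cond p_0} and the doubling of $w$, each is also $\lesssim 1$; Lemma \ref{crucial lemma} is what lets the vector-valued term $[M(M^q(a_j))]^{1/q}$ be summed cleanly. Summing over $j$ gives $\|G\|_{\mathcal{H}^{p}_{q, 2m}}^p \lesssim \sum_j |\lambda_j|^p \lesssim \|f\|_{H^{p}(\mathbb{R}^{n}, w)}^p$. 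Finally, the left inequality for an \emph{arbitrary} $G$ follows by applying this construction to $f = \Delta^m G$: the difference of the two solutions has a polyharmonic representative, which by elliptic regularity is a polynomial, and finiteness of $\|N_{q,2m}(\cdot;\cdot)\|_{L^p(w)}$ together with $w(\mathbb{R}^{n}) = \infty$ (valid since $w \in \mathcal{A}_\infty$ is doubling) forces its degree to be at most $2m-1$, so the two classes coincide in $E^q_{2m-1}$.

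I expect the \textbf{main obstacle} to be the uniform estimation of the global term, namely matching the exponent produced by the potential $b_j$ (through the homogeneity $-n$ of $\partial^{\alpha}\Phi$ and the decay in Lemma \ref{b function}) with precisely the Muckenhoupt class $\mathcal{A}_{s_0}$, $s_0 = \frac{(2m+n/q-\mu)}{n}p$. The freedom in the choice of $0 < \mu < 2m$ is exactly what is needed to place $s_0$ above $1$ while keeping $w$ in the prescribed class, and tracking this threshold is the delicate point of the argument.
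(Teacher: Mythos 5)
Your proposal follows the paper's proof in its two main components, with only cosmetic deviations. For the right-hand inequality the paper invokes \cite[Lemma 6]{segovia} to get $M_{\phi}(\Delta^{m}G)(x) \leq C\, p_{2m+n}(\phi)\, N_{q,2m}(G;x)$ for a single $\phi$ with nonvanishing integral; your direct estimate of the grand maximal function via $\varphi_t \ast \Delta^m g = t^{-2m}(\Delta^m\varphi)_t \ast g$ and the moment cancellation of $\Delta^m\varphi$ is exactly the content of that lemma, re-proved. For surjectivity your skeleton (atomic decomposition via Theorem \ref{atomic decomp} and Remark \ref{w-atomic decomp}, potentials $b_j$, Proposition \ref{pointwise ineq}, four-term estimate, convergence via Lemma \ref{serie N} and completeness) is the paper's. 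Two details differ: the paper treats the global term $I$ by keeping the whole sum inside the $L^p(w)$-norm and applying the vector-valued inequality of Lemma \ref{crucial lemma}, whereas you estimate termwise (legitimate since $p\leq 1$) using Muckenhoupt's theorem for $M$ on $L^{s_0}(\mathbb{R}^n,w)$, $s_0=\frac{(2m+n/q-\mu)p}{n}>1$ --- both are correct, and in your termwise scheme the vector-valued lemma is in fact never needed; in particular your attribution of Lemma \ref{crucial lemma} to the term $[M(M^q(a_j))]^{1/q}$ is misplaced (the paper handles that term, as you could, by H\"older and scalar $L^{p_0/q}$-boundedness of $M$, taking $p_0>q$). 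Also, your inference ``$\Delta^m b_j = a_j$, hence $\Delta^m G = f$'' directly from convergence in $E^q_{2m-1}$ is glossed: $L^q_{loc}$-convergence alone does not give convergence in $\mathcal{S}'(\mathbb{R}^n)$. The paper closes this by first establishing convergence of the series in $\mathcal{H}^p_{q,2m}(\mathbb{R}^n,w)$-norm and then applying the continuity estimate (\ref{continuity}); since you prove that estimate first, this is an ordering fix, not a gap.

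The genuine divergence is your last step. The paper's proof of Theorem \ref{main result} establishes the lower bound in (\ref{double ineq 2}) only for the particular solution constructed from the atomic decomposition of $f$ (estimate (\ref{first ineq})); uniqueness is deferred to Theorem \ref{main result 2}, where it is proved only for power weights $w_a$ via Calder\'on's space $\mathcal{N}^{r,q}_{2m}$ and \cite[Lemma 9]{calderon}, with the extra restriction $a<np/r$. You instead prove injectivity for every admissible $w$: if $\Delta^m(G-\widetilde G)=0$, the tempered representative (available by Proposition \ref{g distribucion}) is annihilated by $\Delta^m$, and a polynomial of degree $\geq 2m$ forces $N_{q,2m}\geq c>0$ uniformly (norm equivalence on finite-dimensional polynomial spaces), incompatible with $N_{q,2m}\in L^p(\mathbb{R}^n,w)$ since $w(\mathbb{R}^n)=\infty$ for nontrivial $w\in\mathcal{A}_\infty$. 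As far as I can check this argument is sound and strictly stronger than what the paper proves --- it would even bear on the paper's closing remark about injectivity for non-power weights --- but your stated justification is wrong: ``elliptic regularity'' only yields smoothness (e.g.\ $e^{x_1}\cos x_2$ is harmonic and is not a polynomial). What you need is the Liouville-type argument: $\Delta^m h=0$ in $\mathcal{S}'(\mathbb{R}^n)$ gives $|\xi|^{2m}\widehat{h}=0$, so $\supp\widehat{h}\subseteq\{0\}$ and $h$ is a polynomial; temperedness of the representative, via Proposition \ref{g distribucion}, is therefore essential and must be invoked explicitly. With that correction, your proof is complete and in this respect improves on the paper's own argument, which, read literally, only proves the left inequality of (\ref{double ineq 2}) for the constructed solutions.
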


\begin{proof} Let $G \in \mathcal{H}^{p}_{q, \, 2m}(\mathbb{R}^{n}, w)$. Since $N_{q, 2m}(G; x)$ is finite a.e. $x \in \mathbb{R}^{n}$, 
by Proposition \ref{g distribucion}, the unique representative $g$ of $G$ (which depends on $x$) satisfying $\eta_{q, 2m}(g; x)=N_{q, 2m}(G; x)$ is a function 
in $L^{q}_{loc}(\mathbb{R}^{n}) \cap \mathcal{S}'(\mathbb{R}^{n})$. Thus, if $\phi \in \mathcal{S}(\mathbb{R}^{n})$ and 
$\int \phi(x) \, dx \neq 0$, from \cite[Lemma 6]{segovia} we get
\[
M_{\phi}(\Delta^{m}G)(x) \leq C \, p_{2m+n}(\phi) N_{q, \, 2m}(G; x).
\]
Thus $\Delta^{m} G \in H^{p}(\mathbb{R}^{n}, w)$ and 
\begin{equation} \label{continuity}
\| \Delta^{m} G \|_{H^{p}(\mathbb{R}^{n}, w)} \leq C \, \| G \|_{\mathcal{H}^{p}_{q, \, 2m}(\mathbb{R}^{n}, w)}.
\end{equation}

Now we shall see that the operator $\Delta^{m}$ is onto. Given $w \in \mathcal{A}_{\frac{(2m+n/q - \mu)}{n}p}$ and 
$f \in H^{p}(\mathbb{R}^{n}, w)$, by Remark \ref{w-atomic decomp}, there exist a sequence of nonnegative numbers 
$\{ \lambda_j \}_{j=1}^{\infty}$ and a sequence of cubes $\{Q_j \}_{j=1}^{\infty}$ and $w-(p, p_0, d)$ atoms $a_j$ supported on $Q_j$, 
such that $f= \sum_{j=1}^{\infty} \lambda_j a_j$ and
\begin{equation} \label{atomic ineq}
\sum_{j=1}^{+\infty} \lambda_{j}^{p} \lesssim \|f \|_{H^{p}(\mathbb{R}^{n}, w)}^{p}.
\end{equation}
For each $j \in \mathbb{N}$ we put $b_j(x)= \int_{\mathbb{R}^{n}} \Phi(x-y) a_j(y) dy,$ from Proposition \ref{pointwise ineq} we have

$$N_{q, 2m}(B_j; x)\lesssim  [w(Q_j)]^{-1/p} \left[M(\chi_{Q_j})(x) \right]^{\frac{2m + n/q - \mu}{n}} + \chi_{4\sqrt{n}Q_j}(x) M(a_j)(x)$$

$$+ \chi_{4\sqrt{n}Q_j}(x) [M(M^{q}(a_j))(x)]^{1/q} + \chi_{4\sqrt{n}Q_j}(x) \sum_{|\alpha|=2m} T^{*}_{\alpha}(a_j) (x).$$
So
$$\sum_{j=1}^{\infty} \lambda_j N_{q, 2m}(B_j; x) \lesssim \sum_{j=1}^{\infty} \lambda_j \frac{\left[M(\chi_{Q_j})(x) \right]^{\frac{2m + n/q - \mu}{n}}}{[w(Q_j)]^{1/p}} + \sum_{j=1}^{\infty} \lambda_j \chi_{4\sqrt{n}Q_j}(x) M(a_j)(x) $$ 

$$+\sum_{j=1}^{\infty} \lambda_j \chi_{4 \sqrt{n} Q_j}(x) [M(M^{q}(a_j))(x)]^{1/q} + \sum_{j=1}^{\infty} \lambda_j \chi_{4 \sqrt{n} Q_j}(x)  \sum_{|\alpha|=2m} T^{*}_{\alpha}(a_j) (x)$$ 

$$= I + II + III + IV.$$

To study $I$, by hypothesis, we have that $0 < p \leq 1$ and $(2m + n/q - \mu) p > n$. Then
\begin{eqnarray*}
\|I\|_{L^{p}(\mathbb{R}^{n}, w)} & = & \left\| \sum_{j=1}^{\infty} \lambda_j [w(Q_j)]^{-1/p} \left[M(\chi_{Q_j})(\cdot) \right]^{\frac{2m + n/q - \mu}{n}} \right\|_{L^{p}(\mathbb{R}^{n}, w)} \\
& = & \left\| \left\{ \sum_{j=1}^{\infty} \lambda_j [w(Q_j)]^{-1/p} \left[M(\chi_{Q_j})(\cdot) \right]^{\frac{2m + n/q - \mu}{n}} \right\}^{\frac{n}{2m + n/q - \mu}} \right\|_{L^{\frac{2m + n/q - \mu}{n}p}(\mathbb{R}^{n}, w)}^{\frac{2m + n/q - \mu}{n}} \\
& \lesssim & \left\| \left\{ \sum_{j=1}^{\infty} \lambda_j [w(Q_j)]^{-1/p} \chi_{Q_j}(\cdot)  \right\}^{\frac{n}{2m + n/q - \mu}} \right\|_{L^{\frac{2m + n/q - \mu}{n}p}(\mathbb{R}^{n}, w)}^{\frac{2m + n/q - \mu}{n}} \\
& = & \left\| \sum_{j=1}^{\infty} \lambda_j [w(Q_j)]^{-1/p} \chi_{Q_j}(\cdot) \right\|_{L^{p}(\mathbb{R}^{n}, w)} \\
& \lesssim & \left( \sum_{j=1}^{+\infty} \lambda_{j}^{p} \right)^{1/p} \lesssim \|f \|_{H^{p}(\mathbb{R}^{n}, w)},
\end{eqnarray*}
the first inequality follows from that $w \in \mathcal{A}_{\frac{(2m+n/q - \mu)}{n}p}$ and Lemma \ref{crucial lemma}, the condition $0 < p \leq 1$ gives the second inequality, and (\ref{atomic ineq}) gives the last one.

Next, we estimate $\| II\|_{L^{}(\mathbb{R}^{n}, w)}^{p}$. Since $0 < p \leq 1$, we have 
\begin{eqnarray*}
\| II\|_{L^{p}(\mathbb{R}^{n}, w)}^{p} & = & \left\| \sum_{j=1}^{\infty} \lambda_j \chi_{4 \sqrt{n} Q_j}(\cdot) M(a_j) (\cdot) \right\|_{L^{p}(\mathbb{R}^{n}, w)}^{p} \\
& \lesssim & \sum_{j=1}^{\infty} \lambda_{j}^{p} \int_{\mathbb{R}^{n}} \chi_{4 \sqrt{n} Q_j}(x) [M(a_j)(x)]^{p} w(x) dx
\end{eqnarray*}
applying Holder's inequality with $\frac{p_0}{p}$, where $\max \{ 1, p(\frac{r_w}{r_w -1}) \} < p_0 < +\infty$, we get
\begin{equation} \label{WM ineq}
\lesssim \sum_{j=1}^{\infty} \lambda_{j}^{p} \left( \int_{4 \sqrt{n}Q_j} [w(x)]^{(\frac{p_0}{p})'} dx \right)^{1 - \frac{p}{p_0}} 
\left( \int_{\mathbb{R}^{n}} [M(a_j)(x)]^{p_0} dx \right)^{\frac{p}{p_0}}.
\end{equation}
By Remark \ref{cond p_0} we have that $w \in RH_{(\frac{p_0}{p})'}$. Then, Proposition 7.1.5 (9) in \cite{grafakos} (p. 503-504), the boundedness of the maximal operator $M$ on $L^{p_0}(\mathbb{R}^{n})$, the condition $(a2)$ of the atom $a_j(\cdot)$, and 
(\ref{atomic ineq}) allow us to obtain
\[
\| II \|_{L^{p}(\mathbb{R}^{n}, w)} \lesssim \left( \sum_{j=1}^{+\infty} \lambda_{j}^{p} \right)^{1/p} \lesssim 
\|f \|_{H^{p}(\mathbb{R}^{n}, w)}.
\]

To study $III$, we apply once again H\"older's inequality with $\frac{p_0}{p}$ and obtain
\begin{eqnarray*}
\| III \|_{L^{p}(\mathbb{R}^{n}, w)}^{p} & \lesssim &  
\sum_{j=1}^{\infty} \lambda_{j}^{p} \int_{\mathbb{R}^{n}} \chi_{4 \sqrt{n} Q_j}(x) [M(M^{q}(a_j))(x)]^{\frac{p}{q}} w(x) dx \\
& \lesssim & \sum_{j=1}^{\infty} \lambda_{j}^{p} \left( \int_{4 \sqrt{n}Q_j} [w(x)]^{(\frac{p_0}{p})'} dx \right)^{1 - \frac{p}{p_0}} 
\left( \int_{\mathbb{R}^{n}} [M(M^{q}(a_j))(x)]^{\frac{p_0}{q}} dx \right)^{\frac{p}{p_0}}.
\end{eqnarray*}
Since we also can take $p_0 > q$, we have that the maximal operator $M$ is bounded on $L^{\frac{p_0}{q}}(\mathbb{R}^{n})$, and  
\[
\| III \|_{L^{p}(\mathbb{R}^{n}, w)}^{p} \lesssim 
\sum_{j=1}^{\infty} \lambda_{j}^{p} \left( \int_{4 \sqrt{n}Q_j} [w(x)]^{(\frac{p_0}{p})'} dx \right)^{1 - \frac{p}{p_0}} 
\left( \int_{\mathbb{R}^{n}} [M(a_j)(x)]^{p_0} dx \right)^{\frac{p}{p_0}}.
\]
Applying the same reasoning as the one carried out after of (\ref{WM ineq}) on the right-side hand of this inequality, one can conclude that
\[
\| III \|_{L^{p}(\mathbb{R}^{n}, w)} \lesssim \|f \|_{H^{p}(\mathbb{R}^{n}, w)}.
\]

Now, we study $IV$. By Lemma \ref{h function} above and Theorem 4 in \cite{stein} p. 42, we have that the operator $T_{\alpha}^{*}$ is bounded on $L^{p_0}(\mathbb{R}^{n})$. Proceeding as in the estimate of $II$, we get
\[
\| IV \|_{L^{p}(\mathbb{R}^{n}, w)} \lesssim \|f \|_{H^{p}(\mathbb{R}^{n}, w)}.
\]

Thus, the weighted estimates of $I, II, III$ and $IV$ give
$$\left\| \sum_{j=1}^{\infty} \lambda_j N_{q, 2m}(B_j; \, \cdot \, ) \right\|_{L^{p}(\mathbb{R}^{n}, w)} \lesssim \|f \|_{H^{p}(\mathbb{R}^{n}, w)}.$$
Hence
\begin{equation}
\sum_{j=1}^{\infty} \lambda_j N_{q, 2m}(B_j; x) < \infty \,\,\,\,\,\, \text{a.e.} \,\, x \in \mathbb{R}^{n}, \label{Nq}
\end{equation}
and
\begin{equation}
\left\| \sum_{j=M+1}^{\infty} \lambda_j N_{q, 2m}(B_j; \, \cdot \, ) \right\|_{L^{p}(\mathbb{R}^{n}, w)} \longrightarrow 0, \,\,\,\, \text{as} \,\,  M \rightarrow \infty  \label{Nq2}.
\end{equation}
From (\ref{Nq}) and Lemma \ref{serie N} follow that there exists a function $G$ such that $\sum_{j=1}^{\infty} k_j B_j = G$ in 
$E^{q}_{2m-1}$ and
$$N_{q, 2m} \left( \left(G - \sum_{j=1}^{M} k_j B_j \right) ; \, x \right) \leq C \sum_{j=M+1}^{\infty} k_j N_{q, 2m}(B_j; x).$$
This pointwise estimate together with (\ref{Nq2}) imply
$$\left\| G - \sum_{j=1}^{M} k_j B_j \right\|_{\mathcal{H}^{p}_{q,2m}(\mathbb{R}^{n}, w)} \longrightarrow 0, \,\,\,\, \text{as} \,\,  M \rightarrow \infty.$$
By proposition \ref{completitud}, we have that $G \in \mathcal{H}^{p}_{q,2m}(\mathbb{R}^{n}, w)$ and $G = \sum_{j=1}^{\infty} \lambda_j B_j$ in $\mathcal{H}^{p}_{q,2m}(\mathbb{R}^{n}, w)$. Since $\Delta^{m}$ is a continuous operator from $\mathcal{H}^{p}_{q,2m}(\mathbb{R}^{n}, w)$ into $H^{p}(\mathbb{R}^{n}, w)$ (see (\ref{continuity}) above), we get
\[
\Delta^{m} G = \sum_j \lambda_j \Delta^{m} B_j = \sum_j \lambda_j a_j = f,
\]
in $H^{p}(\mathbb{R}^{n}, w)$. This shows that $\Delta^{m}$ is onto $H^{p}(\mathbb{R}^{n}, w)$. Moreover,
\begin{equation} \label{first ineq}
\| G \|_{\mathcal{H}^{p}_{q,2m}(\mathbb{R}^{n}, w)} \lesssim \left\| \sum_{j=1}^{\infty} k_j N_{q, 2m}(B_j; \, \cdot \, ) 
\right\|_{L^{p}(\mathbb{R}^{n}, w)}
\end{equation}
\begin{eqnarray*}
\hspace{.3cm} & \lesssim & \|f \|_{H^{p}(\mathbb{R}^{n}, w)} \\\\
& = & \| \Delta^{m} G \|_{H^{p}(\mathbb{R}^{n}, w)}.
\end{eqnarray*}
Finally, (\ref{continuity}) and (\ref{first ineq}) give (\ref{double ineq 2}). This completes the proof. 
\end{proof}

The following theorem generalize to \cite[Theorem 1]{segovia}.

\begin{theorem} \label{main result 2} Let $w_a(x)= |x|^{a}$, $1 < q \leq r < \infty$, $n (2m + n/q)^{-1} <  p  \leq 1$, and let 
$0 < \mu  <  2m$ be such that $n  <  (2m+n/q - \mu) \, p$. If \,
$0 \leq a < \min \left\{ \frac{np}{r}, n \left(\frac{(2m+n/q - \mu)}{n}p - 1 \right) \right\}$, then the iterated Laplace operator 
$\Delta^{m}$ is a bijective mapping from $\mathcal{H}^{p}_{q, 2m}(\mathbb{R}^{n}, w_a)$ onto $H^{p}(\mathbb{R}^{n}, w_a)$, and 
(\ref{double ineq 2}) holds with $w = w_a$.
\end{theorem}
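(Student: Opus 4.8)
The plan is to derive Theorem \ref{main result 2} from Theorem \ref{main result}, whose hypotheses already hand us surjectivity and the two-sided estimate (\ref{double ineq 2}); the only genuinely new ingredient is \emph{injectivity} of $\Delta^m$ on $\mathcal{H}^{p}_{q,2m}(\mathbb{R}^{n}, w_a)$. So I would split the argument into two parts: first check that $w_a$ falls under the scope of Theorem \ref{main result}, and then prove that the kernel of $\Delta^m$ is trivial.

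For the first part I would invoke the standard characterization $|x|^{a}\in\mathcal{A}_{s}$ if and only if $-n<a<n(s-1)$, applied with $s=\frac{(2m+n/q-\mu)}{n}p$ (which satisfies $s>1$ by the standing hypothesis $n<(2m+n/q-\mu)p$). The second term in the minimum, namely $a<n\!\left(\frac{(2m+n/q-\mu)}{n}p-1\right)=n(s-1)$, together with $a\ge 0>-n$, is exactly the condition $w_a\in\mathcal{A}_{\frac{(2m+n/q-\mu)}{n}p}$. Hence Theorem \ref{main result} applies verbatim with $w=w_a$, giving that $\Delta^m$ is onto $H^{p}(\mathbb{R}^{n},w_a)$ and that (\ref{double ineq 2}) holds. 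The complementary bound $a<np/r$ (with $1<q\le r<\infty$) is what guarantees the reverse-H\"older and integrability conditions on $w_a$ needed to run the atomic estimates of Theorem \ref{main result} with atoms taken in $L^{r}$, so its role is purely to legitimize that machinery for this weight.

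The heart of the proof is injectivity. Let $G\in\mathcal{H}^{p}_{q,2m}(\mathbb{R}^{n},w_a)$ with $\Delta^m G=0$. Since $N_{q,2m}(G;\,\cdot\,)$ is finite a.e., Lemma \ref{g representante} produces a representative $g\in G$ with $\eta_{q,2m}(g;x_0)<\infty$, and Proposition \ref{g distribucion} places $g\in\mathcal{S}'(\mathbb{R}^{n})$; as $\Delta^m$ is independent of the representative, $\Delta^m g=0$. A tempered distribution annihilated by $\Delta^m$ has Fourier transform supported at the origin, hence $g$ is a polynomial. I would then show that membership of $G$ in the space forces $\deg g\le 2m$: if $g$ had a homogeneous part of degree $D>2m$, then for every representative $g+P$ ($P\in\mathcal{P}_{2m-1}$) the leading term gives $|g+P|_{q,Q(x,r)}\sim c\,r^{D}$ as $r\to\infty$, so $r^{-2m}|g+P|_{q,Q(x,r)}\to\infty$ and $N_{q,2m}(G;x)\equiv+\infty$, contradicting finiteness. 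In the remaining case $\deg g=2m$, write $g=g_{2m}+(\text{lower order})$ with $g_{2m}\not\equiv 0$ homogeneous of degree $2m$ and $\Delta^m g_{2m}=0$. For each fixed $x$, finiteness of $\eta_{q,2m}(g+P;x)$ as $r\to0$ forces $g+P$ to vanish to order $2m$ at $x$, which singles out the unique representative $h_x(y)=g_{2m}(y-x)$. The change of variables $z=y-x$ then yields
\[
N_{q,2m}(G;x)=\sup_{r>0}r^{-2m}\,|g_{2m}(\cdot-x)|_{q,Q(x,r)}=|g_{2m}|_{q,Q(0,1)}=:C_g>0,
\]
a positive constant independent of $x$. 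Consequently $\|G\|_{\mathcal{H}^{p}_{q,2m}(\mathbb{R}^{n},w_a)}^{p}=C_g^{p}\int_{\mathbb{R}^{n}}|x|^{a}\,dx=+\infty$, since $a>-n$ makes $|x|^{a}$ non-integrable at infinity. This contradicts $G\in\mathcal{H}^{p}_{q,2m}(\mathbb{R}^{n},w_a)$, so $g\in\mathcal{P}_{2m-1}$ and $G=0$.

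Combining the surjectivity and estimate from Theorem \ref{main result} with this injectivity gives that $\Delta^m$ is bijective and that (\ref{double ineq 2}) holds with $w=w_a$. I expect the main obstacle to be the injectivity computation, specifically the identification of the minimizing representative $g_{2m}(\cdot-x)$ and the verification that $N_{q,2m}(G;\cdot)$ is genuinely a nonzero constant; the decisive point is the non-integrability of the power weight $|x|^a$ at infinity, the same mechanism that drives Theorem \ref{example}.
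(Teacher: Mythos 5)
Your proposal is correct, and its surjectivity half coincides with the paper's: both verify $w_a\in\mathcal{A}_{\frac{(2m+n/q-\mu)}{n}p}$ via the standard power-weight characterization $|x|^{a}\in\mathcal{A}_{s}\iff -n<a<n(s-1)$ and then quote Theorem \ref{main result}. Your injectivity argument, however, is genuinely different from the paper's. The paper shows that every $G\in\mathcal{H}^{p}_{q,2m}(\mathbb{R}^{n},w_a)$ satisfies $G\subset\mathcal{N}^{r,q}_{2m}$, Calder\'on's class: it sets $\mathcal{O}=\{x:[w_a(x)]^{1/p}N_{q,2m}(G;x)>1\}$, notes $|\mathcal{O}|<\infty$, and bounds $\int_{\mathcal{O}^{c}}[N_{q,2m}(G;x)]^{r}\,dx$ by $\|N_{q,2m}(G;\cdot)\|_{L^{p}(w_a)}^{p}$ plus $\int_{|x|\leq 1}|x|^{-ar/p}\,dx$, the latter finite precisely because $a<np/r$; triviality of the kernel is then imported wholesale from \cite[Lemma 9]{calderon}. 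You instead argue directly: $\Delta^{m}g=0$ in $\mathcal{S}'(\mathbb{R}^{n})$ forces $\supp\widehat{g}\subseteq\{0\}$, so $g$ is a polynomial, and you exclude $\deg g\geq 2m$ by computing the maximal function itself --- $N_{q,2m}(G;\cdot)\equiv\infty$ when $\deg g>2m$, while for $\deg g=2m$ your identification of the unique admissible representative $g_{2m}(\cdot-x)$ and the scaling computation giving $N_{q,2m}(G;x)\equiv|g_{2m}|_{q,Q(0,1)}>0$ are both right, so $N\in L^{p}(\mathbb{R}^{n},w_a)$ fails because $|x|^{a}$ is non-integrable at infinity (the same mechanism as Theorem \ref{example}, as you note). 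Your route is more elementary and self-contained, avoids Calder\'on's lemma and the auxiliary class $\mathcal{N}^{r,q}_{2m}$ entirely, and in fact proves injectivity for every $a\geq 0$ (indeed any $a>-n$) without ever using $a<np/r$. One factual correction: your explanation that $a<np/r$ serves to ``legitimize the atomic machinery with atoms taken in $L^{r}$'' is wrong --- Theorem \ref{main result} and its atomic estimates never involve $r$ (the atoms live in $L^{p_0}$); in the paper the sole purpose of the parameter $r$ and of the bound $a<np/r$ is the injectivity step described above, and your argument renders that hypothesis superfluous. This misattribution is harmless, since an unused hypothesis costs nothing, but it is worth knowing where the paper actually spends it.
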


\begin{proof}
To establish the injectivity of $\Delta^{m}$ in $\mathcal{H}^{p}_{q, 2m}(\mathbb{R}^{n}, w_a)$, we need to introduce the space 
$\mathcal{N}^{r,q}_{2m}$ (see p. 564 in \cite{calderon}). Let $1 < q \leq r < \infty$ and $m \geq 1$, we say 
that $g \in \mathcal{N}^{r,q}_{2m}$ if $g \in L^{q}_{loc}(\mathbb{R}^{n})$ and a.e. $x \in \mathbb{R}^{n}$ there exists a 
polynomial $p_x (\cdot) \in \mathcal{P}_{2m-1}$ such that, for some measurable set $\mathcal{O}$ with $|\mathcal{O}| < \infty$, 
$x \to \eta_{q, 2m}((g + p_x)(\cdot) ; x ) \in L^{r}(\mathbb{R}^{n} \setminus \mathcal{O})$. We observe that if $G \in E^{q}_{2m-1}$ 
and $N_{q,2m}(G; \, \cdot) \in L^{r}(\mathbb{R}^{n} \setminus \mathcal{O})$, where $1 < q \leq r < \infty$ and $|\mathcal{O}| < \infty$, then by (i) of Lemma \ref{g representante} we have that $G \subset \mathcal{N}^{r,q}_{2m}$. Since $\Delta^{m} G = 0$ means that
$\Delta^{m} g = 0$ for all $g \in G$, by \cite[Lemma 9]{calderon}, it suffices to show that for a such $G$ there exists a measurable 
set $\mathcal{O}$ such that $|\mathcal{O}| < \infty$ and $N_{q, 2m}(G; \, \cdot) \in L^{r}(\mathbb{R}^{n} \setminus \mathcal{O})$. Given
$G \in \mathcal{H}^{p}_{q, 2m}(\mathbb{R}^{n}, w_a)$, let $\mathcal{O}$ be defined by
\[
\mathcal{O} = \{ x \in \mathbb{R}^{n} : [w_a(x)]^{1/p} N_{q, 2m}(G; x) > 1 \}.
\]
Since $N_{q, 2m}(G; \, \cdot) \in L^{p}(\mathbb{R}^{n}, w_a)$ it follows that $|\mathcal{O}| < \infty$. Now, for $r \geq q$, 
$0 \leq a < \frac{np}{r}$, and since $p \leq 1 < q$, we obtain
\begin{eqnarray*}
\int_{\mathcal{O}^{c}} [N_{q, 2m}(G; x)]^{r} \, dx & = & \int_{\mathcal{O}^{c}} [N_{q, 2m}(G; x)]^{r} [w_a(x)]^{r/p}  [w_a(x)]^{-r/p} \, dx \\
& \leq & \int_{\mathcal{O}^{c}} [N_{q, 2m}(G; x)]^{p} w_a(x)  [w_a(x)]^{-r/p} \, dx \\
& \leq & \int_{\mathbb{R}^{n}} [N_{q, 2m}(G; x)]^{p} w_a(x) \, dx + \int_{|x| \leq 1} [w_a(x)]^{-r/p} \, dx \\
& < & \infty.
\end{eqnarray*}
Thus $G \subset \mathcal{N}^{r,q}_{2m}$ for all $G \in \mathcal{H}^{p}_{q, 2m}(\mathbb{R}^{n}, w_a)$. In particular, this gives the injectivity of $\Delta^{m}$ in $\mathcal{H}^{p}_{q, 2m}(\mathbb{R}^{n}, w_a)$.

To finish, we observe that $0 \leq a < \min \left\{ \frac{np}{r}, n \left(\frac{(2m+n/q - \mu)}{n}p - 1 \right) \right\}$ implies that 
$w_a \in \mathcal{A}_{\frac{(2m+n/q - \mu)}{n}p}$ (see p. 506 in \cite{grafakos}). So, by applying Theorem \ref{main result} with 
$w = w_a$, we obtain the surjectivity of $\Delta^{m}$ onto $H^{p}(\mathbb{R}^{n}, w_a)$ and also (\ref{double ineq 2}).
\end{proof}

Thus, Theorem \ref{main result 2} allows us to conclude that given $f \in H^{p}(\mathbb{R}^{n}, w_a)$, where
$0 \leq a < \min \left\{ \frac{np}{r}, n \left(\frac{(2m+n/q - \mu)}{n}p - 1 \right) \right\}$, the equation $\Delta^{m} F = f$
has an unique solution in $\mathcal{H}^{p}_{q, 2m}(\mathbb{R}^{n}, w_a)$, namely: $F := (\Delta^{m})^{-1}f$.

\begin{remark} Theorem \ref{main result 2} does not hold in general for $\mathcal{H}^{p}_{q,\gamma}(\mathbb{R}, w)$ when $\gamma$ is not a natural number. Ombrosi \cite{sheldy} gave an example which Theorem \ref{main result 2} is not true for $\mathcal{H}^{p}_{q,\gamma}(\mathbb{R}, w)$ with $w \equiv 1$ and the operator $\left(\frac{d}{dx}\right)^{\gamma}$, when $0 < \gamma < 1 $ and $(\gamma + 1/q)p>1$.
\end{remark}

\begin{remark} An open question if there exists a weight $w$ that not be a power weight such that $\Delta^{m}$ be injective in
$\mathcal{H}^{p}_{q, 2m}(\mathbb{R}^{n}, w)$. Another question that may be more interesting that arise of this work is the following: there are analogous results to those established in Theorems \ref{main result} and \ref{main result 2} if we replace the iterated Laplace operator $\Delta^{m}$ by other operator? To give a positive answer, our new operator should have fundamental solutions and one should consider
 Calder\'on-Hardy and Hardy spaces associated to this operator.
\end{remark}

\bigskip
\address{
Departamento de Matem\'atica \\
Universidad Nacional del Sur \\
8000 Bah\'{\i}a Blanca - Buenos Aires \\
Argentina}
{pablo.rocha@uns.edu.ar}


\begin{thebibliography}{9}

\bibitem{Auscher} {\sc P. Auscher and M. Egert}, Boundary value problems and Hardy spaces for elliptic systems with block
structure. https://hal.archives-ouvertes.fr/hal-03037758

\bibitem{Egert} {\sc P. Auscher and M. Egert}, \textit{Hardy spaces for boundary value problems of elliptic systems with block structure},
J. Geom. Anal. 31 (9) (2021), 9182-9198.

\bibitem{calderon} {\sc A. P. Calder\'on}, \textit{Estimates for singular integral operators in terms of maximal functions,} Studia Math. 
44 (1972), 563-582.

\bibitem{Coifman} {\sc R. Coifman}, \textit{A real variable characterization of $H^{p}$}, Studia Math. 51 (1974), 269-274.

\bibitem{Fiorenza} {\sc D. Cruz-Uribe and A. Fiorenza }, Variable Lebesgue Spaces, Birkh\"auser, 2013.

\bibitem{Cruz-Uribe} {\sc D. V. Cruz Uribe, J. M. Martell and C. P\'{e}rez}, Weights, extrapolation and the theory of Rubio de Francia, 
Birkh\"{a}user. Operator theory: advances and applications. Vol. 215, 2011.

\bibitem{Cruz-Uribe2} {\sc D. V. Cruz-Uribe and D. Wang}, \textit{Variable Hardy Spaces}, Indiana Univ. Math. J. 63 (2) (2014), 447-493.

\bibitem{Diening2} {\sc L. Diening, P. Harjulehto, P. H\"ast\"o and M. Ru$\check{\text{z}}$i$\check{\text{c}}$ka}, Lebesgue and Sobolev spaces with Variable Exponents, Springer, 2011.

\bibitem{fefferman} {\sc C. Fefferman and E. M. Stein}, \textit{$H^{p}$ Spaces of Several Variables,} Acta Math. 129 (3-4) (1972), 
137-193.

\bibitem{Garcia} {\sc J. Garc\'ia-Cuerva}, \textit{Weighted $H^{p}$ spaces}, Dissertations Math. 162 (1979), 1-63.

\bibitem{Garcia2} {\sc J. Garc\'ia-Cuerva and J. L. Rubio de Francia}, Weighted norm inequalities and related topics. North-Holland Mathematics Studies, 116 Notas de Matem\'atica (104), North-Holland Publishing Co., Amsterdam, 1985.

\bibitem{segovia} {\sc A. B. Gatto, J. G. Jim\'enez and C. Segovia}, \textit{On the solution of the equation $\Delta^{m}F = f$ for $f \in H^{p}$.} Conference on Harmonic Analysis in honor of Antoni Zygmund, Volumen II, Wadsworth international mathematics series, 1983.

\bibitem{Gelfand} {\sc I. M. Gel'fand and G. E. Shilov}, Generalized Functions: Properties and Operations, Vol. 1, Academic Press Inc., 1964.

\bibitem{grafakos} {\sc L. Grafakos}, Classical Fourier Analysis, 3rd edition, Graduate Texts in Mathematics, 249, Springer New York, 2014.

\bibitem{grafakos2} {\sc L. Grafakos}, Modern Fourier Analysis, 3rd edition, Graduate Texts in Mathematics, 250, Springer New York, 2014.

\bibitem{Kovacik}{\sc O. Kov\'a$\check{\text{c}}$ik and J. R\'akosn\'ik}, {\it On spaces $L^{p(x)}$ and $W^{k,p(x)}$}, Czechoslovak Math. J. 
41 (116) (1991), 592-618.

\bibitem{Latter} {\sc R. Latter}, \textit{A characterization of $H^{p}(\mathbb{R}^{n})$ in terms of atoms}, Studia Math. 62 (1978), 93-101.

\bibitem{Lu} {\sc S. Lu}, Four lectures on real $H^{p}$ spaces, Singapore: World Scientific V, 1995.

\bibitem{Muck} {\sc B. Muckenhoupt}, \textit{Weighted norm inequalities for the Hardy maximal function}, Trans. Amer. Math. Soc. 165 (1972), 
207-226.

\bibitem{nakai} {\sc E. Nakai and Y. Sawano}, \textit{Hardy spaces with variable exponents and generalized Campanato spaces}, Journal of Functional Analysis 262 (2012), 3665-3748.

\bibitem{sheldy} {\sc S. Ombrosi}, \textit{On spaces associated with primitives of distributions in one-sided Hardy spaces,} Rev. Un. Mat. Arg., Vol 42 (2) (2001), 81-102.

\bibitem{segovia2} {\sc S. Ombrosi and C. Segovia}, \textit{One-sided singular integral operators on Calder\'on-Hardy spaces,} Rev. Un. Mat. Arg., Vol 44 (1) (2003), 17-32.

\bibitem{sheldy1} {\sc S. Ombrosi, A. Perini and R. Testoni}, \textit{An Interpolation Theorem between Calder\'on-Hardy Spaces.} Rev. Un. Mat. Arg., vol. 58 (1) (2017), 1-19.

\bibitem{Orlicz} {\sc W. Orlicz}, {\it \"Uber konjugierte Exponentenfolgen}, Studia Math. 3 (1931), 200-211.

\bibitem{perini} {\sc A. Perini}, \textit{Boundedness of one-sided fractional integrals in the one-sided Calder\'on-Hardy spaces,} Comment. Math. Univ. Carolinae, Vol. 52 (1) (2011), 57-75.

\bibitem{rocha1} {\sc P. Rocha}, \textit{Calder\'on-Hardy spaces with variable exponents and the solution of the equation $\Delta^{m}F=f$ for
$f \in H^{p(\cdot)}(\mathbb{R}^{n})$}, Math. Ineq. \& appl., Vol 19 (3) (2016), 1013-1030.

\bibitem{rocha2} {\sc P. Rocha}, \textit{On the atomic and molecular decomposition of weighted Hardy spaces}, Rev. Un. Mat. Arg., vol. 61 
(2) (2020), 229-247.

\bibitem{Torch} {\sc J. O. Str\"omberg and A. Torchinsky}, Weighted Hardy spaces, Lecture Notes in Mathematics, vol 131, Springer-Verlag, Berl\'{\i}n, 1989.

\bibitem{stein} {\sc E. M. Stein}, Singular Integrals and Differentiability Properties of Functions, Princeton University Press, 1970.

\bibitem{Elias} {\sc E. Stein}, Harmonic Analysis: Real-Variable Methods, Orthogonality, and Oscillatory Integrals, Princeton
University Press, 1993.

\bibitem{Stein2} {\sc E. M. Stein and  G. Weiss}, \textit{On the theory of harmonic functions of several variables I:
The theory of $H^{p}$ spaces}, Acta Math. 103 (1960), 25-62.

\bibitem{Taibleson} {\sc M. H. Taibleson and G. Weiss}, \textit{The molecular characterization of certain Hardy spaces}, Ast\'{e}risque 77 
(1980), 67-149.

\bibitem{Uchi} {\sc A. Uchiyama}, Hardy spaces on the Euclidean space, Springer Monographs in Mathematics. Berlin: Springer xiii, 2001.



\end{thebibliography}
\end{document}